%
%
%
%
%
\RequirePackage{fix-cm}
\documentclass[smallextended,numbook,runningheads]{svjour3}       
\smartqed  
\usepackage{graphicx}
%
%
%
%
%

\usepackage{amsmath,amsfonts,amsbsy,amstext,amsopn,amssymb}
\usepackage{longtable}
\usepackage{theorem}
\usepackage{cite}
\usepackage{booktabs}
\usepackage{anysize,amsmath,indentfirst,geometry,xcolor}
\usepackage[colorlinks,citecolor=cyan,linkcolor=blue]{hyperref}

\smartqed  
\usepackage{amsfonts,anysize,amsmath,indentfirst,geometry,xcolor}
\usepackage[colorlinks,citecolor=cyan,linkcolor=blue]{hyperref}
\usepackage{amssymb}
\usepackage{mathrsfs}
\usepackage[figuresleft]{rotating}
\usepackage{graphicx}
\usepackage{amsbsy}
\usepackage{eqnarray}
\usepackage{booktabs}
\usepackage{multirow}
\usepackage{epstopdf}
\usepackage{array}
\usepackage{algorithm}
\usepackage{algorithmic}
\usepackage{graphicx}
\usepackage{wasysym}
\usepackage[misc]{ifsym}
\usepackage{cite}
\usepackage{epstopdf}


\usepackage{latexsym}
\usepackage{amssymb}
\usepackage{graphicx}
\usepackage{algorithm}


%


%
%
\newcommand {\eq} [1] {\begin{equation}\label{#1}}
\newcommand {\en} {\end{equation}}
%

\newcommand {\cA}       {{\cal A}}
\newcommand {\cB}       {{\cal B}}

\newcommand {\cP}       {{\cal P}}

\newcommand {\cX}       {{\cal X}}
\newcommand {\cY}       {{\cal Y}}


%

%

%

%

%

%

%

%
\newcommand {\R}        {{\mathbb R}}

%

%
\newcommand {\mat}      [1] {\left[\begin{array}{#1}}
\newcommand {\rix}          {\end{array}\right]}


\newcommand {\rank}     {\mathop{\rm rank}\nolimits}

\newcommand {\trace}    {\mathop{\rm trace}\nolimits}

%

%
%
 %
 \catcode`@=11
 %
 \font\tenex=cmex10 
 \newdimen\p@renwd
 \setbox0=\hbox{\tenex B} \p@renwd=\wd0 
 \def\bmat#1{\begingroup \m@th
   \setbox\z@\vbox{\def\cr{\crcr\noalign{\kern2\p@\global\let\cr\endline}}%
     \ialign{$##$\hfil\kern2\p@\kern\p@renwd&\thinspace\hfil$##$\hfil
       &&\quad\hfil$##$\hfil\crcr
       \omit\strut\hfil\crcr\noalign{\kern-\baselineskip}%
       #1\crcr\omit\strut\cr}}%
   \setbox\tw@\vbox{\unvcopy\z@\global\setbox\@ne\lastbox}%
   \setbox\tw@\hbox{\unhbox\@ne\unskip\global\setbox\@ne\lastbox}%
   \setbox\tw@\hbox{$\kern\wd\@ne\kern-\p@renwd\left[\kern-\wd\@ne
     \global\setbox\@ne\vbox{\box\@ne\kern2\p@}%
     \vcenter{\kern-\ht\@ne\unvbox\z@\kern-\baselineskip}\,\right]$}%
   \null\;\vbox{\kern\ht\@ne\box\tw@}\endgroup}
 %
 \catcode`@=12
 %





\newcommand{\vect}{{\sf{vec}}}

\newcommand{\rt}{{\top }}

\def\e{\varepsilon}
\def\Oh{{\mathcal O}}

\def\bfe{{\mathbf e}}

\def\bfe{{\mathbf e}}
\def\bfK{{\mathcal K}}
\def\bfP{{\mathcal P}}
\def\bfH{{\mathcal M}}
\def\bfJ{{\mathcal J}}



\def\bu{{\mathbf u }}
\def\bv{{\mathbf v }}
\def\bu{{\mathbf u }}
\def\bz{{\mathbf z }}
\def\bw{{\mathbf w }}

\def\bfA{{\mathcal A}}
\def\bfK{{\mathcal K}}

\def\bfP{{\mathcal P}}
\def\bfH{{ M}}
\def\bfJ{{N}}

\def\sd{{\sf d}}

\newcommand{\call}{{\mathbf{L}}}

\def\rel{{\rm rel}}
\def\u{{\rm U}}
\usepackage{lscape}



\begin{document}

\title{Mixed and componentwise condition numbers for a linear function of the solution of the linear least squares problem with equality constrains
}

\titlerunning{Mixed and componentwise condition numbers for LSE}        

\author{Huai-An Diao
}


\institute{H.A. Diao \at
              School of Mathematics and Statistics, Northeast
Normal University,\\
 No. 5268 Renmin Street, Chang Chun 130024, P.R.
of China. \\
              \email{hadiao@nenu.edu.cn, hadiao78@yahoo.com }
}

\date{Received: date / Accepted: date}

\maketitle

\begin{abstract}
In this paper, we consider the mixed and componentwise condition numbers for a linear function of the solution to the linear least squares problem with equality constrains (LSE). We derive the explicit expressions of the mixed and componentwise condition numbers through the dual techniques. The sharp upper bounds for the derived  mixed and componentwise condition numbers are obtained, which can be estimated efficiently by means of the classical Hager-Higham algorithm for estimating matrix one-norm   during using the generalized QR factorization method for solving LSE. The numerical examples show that the derived condition numbers can give sharp perturbation bounds, on the other hand normwise condition numbers can severely overestimate the relative errors because normwise condition numbers ignore the data sparsity and scaling.
\keywords{Linear least squares problem with equality constrains \and componentwise perturbation \and condition number \and adjoint operator \and Hager-Higham algorithm}
 \subclass{15A09 \and 15A12 \and  65F30\and 65F35}
\end{abstract}

\section{Introduction}
\setcounter{equation}{0}

The least squares  problem with equality constrains (LSE) has  the following form:
\begin{equation}\label{eq:lsle}
 \mbox{LSE}:\qquad\min_{x\in \R^n}\|Ax-b\|_2 \mbox{ subject to } Cx=d,
\end{equation}
where $A \in \R^{m \times n}, \, C \in \R^{p \times n}, \, b\in
\R^m \mbox{ and } d \in \R^p.$ The rank conditions \cite{Bjorck96}
\begin{equation}\label{eq:rank condition}
\rank(C)=p \mbox{ and } \rank\left(\begin{matrix}A \cr
C\end{matrix}\right)=n
\end{equation}
guarantee the existence of the unique solution of LSE \cite{Bjorck96,CoxHigham99}
$$
x=\bfK b+C_A^\dagger d,
$$ where
 $$
 \bfK=(A \bfP)^\dagger,\quad \bfP=I_n-C^\dagger C,\quad C_A^\dagger=\left(I_n-\bfK A\right)C^\dagger,
 $$
and $B^\dagger$ is the Moore-Penrose inverse of $B$ \cite{Bjorck96}. Under the rank condition $\rank(C)=p$ the equality constrains $Cx=d$ in \eqref{eq:lsle} are consistent, thus LSE \eqref{eq:lsle} has solutions. The second rank condition of \eqref{eq:rank condition} guarantees the uniqueness of the solution to  \eqref{eq:lsle}. On the other hand, the augmented system also defines the unique solution $x$ as follows
\begin{equation}\label{eq:aug}
\bfA {\bf x} :=\begin{bmatrix}\bf0&\bf0&C\cr
\bf0&I_m& A \cr
C^\rt & A^\rt&\bf0\end{bmatrix}\begin{bmatrix} \lambda\cr r\cr x\end{bmatrix}=\begin{bmatrix}
  d\cr b\cr \bf0
\end{bmatrix}:=\bf b,
\end{equation}
where $A^\rt$ is the transpose of $A$, $I_m$ denotes the $m\times m$ identity matrix, $\bf0$ is the zeros matrix with conformal dimension, $\lambda\in \R^{p}$ is a vector of Lagrange multipliers, and $r$ is the residual vector $r=b-Ax$. As stated in \cite{CoxHigham99,Elden80}, when the rank condition \eqref{eq:rank condition} is satisfied, $\bfA$ is nonsingular and its inverse has the following expression
\begin{equation}\label{eq:lse inverse}
  \bfA^{-1}=\begin{bmatrix}(AC_A^\dagger)^\rt AC_A^\dagger&-(AC_A^\dagger)^\rt&(C_A^\dagger)^\rt\cr
-AC_A^\dagger&I_m-(A\bfP)\bfK& \bfK^\rt \cr
C_A^\dagger & \bfK&-\left((A \bfP)^\rt (A \bfP)\right)^\dagger\end{bmatrix}.
\end{equation}

The LSE problem has many applications such as in the analysis of large scale structures \cite{Barlow1988SISC}, and the solution of the inequality constrained least square problem \cite{LawsonHanson95} etc. The algorithms and perturbation analysis of LSE can be found in several papers \cite{Barlow1988SISC,Bjorck96,CoxHigham99,Elden80,Hammarling87,LawsonHanson95,MalyshevSIAM03,Paige90QR,WeiLSE92} and references therein.

In numerical analysis, condition number is an important research topic, which measures the {\em
worst-case} sensitivity of an input data with respect to {\em small}
perturbations on it; see a recent monograph \cite{Cucker2013Book} and references therein. A problem with large condition number is called {\em ill-posed} problem~\cite{Demmel1987NumerMath}. Also Demmel pointed that the distance of a problem to ill-posed sets is the reciprocal of its condition number. To the best of our knowledge a general theory of
condition numbers was first given by Rice in~\cite{Rice}. Let $\phi
: \R^{s} \rightarrow \R^{t}$ be a mapping, where $\R^s$ and $\R^t$
are the usual $s$- and $t$-dimensional Euclidean spaces equipped
with some norms, respectively. If $\phi$ is continuous and
Fr\'{e}chet differentiable in the neighborhood of $a_0 \in \R^s$
then, according to~\cite{Rice}, the {\em relative normwise condition
number} of $a_0$ is given by
\begin{equation}\label{eq:def_CN}
 {\rm cond}^\phi(a_0):=\lim_{\e \rightarrow 0}\sup_{\|\Delta a\| \leq
 \e}\left(\frac{\|\phi(a_0+\Delta
 a)-\phi(a_0)\|}{\|\phi(a_0)\|}\diagup \frac{\|\Delta
 a\|}{\|a_0\|}\right)=\frac{\|\sd{\phi}(a_0)\|\|a_0\|}{\|\phi(a_0)\|},
\end{equation}
where $\sd{\phi}(a_0)$ is the Fr\'{e}chet derivative of $\phi$ at
$a_0$. Condition number can tell us the loss of the precision in finite precision computation of a problem. With the backward error of a problem, we have the following rule of thumb~\cite{Higham2002Book}
$$
\mbox{ forward error} \lesssim \mbox {conditon number} \times \mbox {backward error},
$$
which can bound the relative error of the computed solution.

In scientific computing, we usually encounter sparse or badly-scaled input data. So the normwise condition number defined in (\ref{eq:def_CN}) may overestimate the true conditioning of the problem since it does not take account of the structure of both input and output data with
respect to scaling and/or sparsity. In practice, due to rounding errors
and data storage limitation, it is reasonable to measure the input
errors componentwise instead of normwise. The forward error based on the normwise condition number may be overestimated. Since 1980's, componentwise analysis~\cite{Skeel79,Rohn89}, which often gives sharper error bounds, has been used. In fact, most error bounds in LAPACK
\cite{Anderson1999Lapack} are based on componentwise perturbation analysis. There
are two kinds of condition numbers in componentwise analysis: the
mixed condition numbers and componentwise condition numbers
\cite{Gohberg}. The mixed condition numbers use the
componentwise error analysis for the input data, while the normwise
error analysis for the output data. On the other hand, the
componentwise condition numbers use the componentwise error analysis
for both input and output data.   Consequently, the
perturbation bounds based on the mixed and componentwise condition
numbers are more effective and sharper than those based on the
normwise condition number when the data is sparse or badly scaled. An early survey for mixed and componentwise analysis in numerical
linear algebra can be found in~\cite{HighamSurvey94}. Mixed and componentwise condition numbers had been studied extensively in linear least squares problem \cite{CDW07,DiaoWangWeiQiao2013,CuckerDiao2007Calcolo}, total least squares problems \cite{Zhou,LiJia2011,DiaoWeiXie}, indefinite least squares problem \cite{LiWangYang2014ILS}, generalized spectral
              projections and matrix sign functions \cite{WangetalTaiwan2016}, Tikhonov regularization problems \cite{ChuetalTik2011,DiaoWeiQiao2016}, matrix equations \cite{DiaoXiangWei2007,DiaoShiWei2013,WangYangLi2015} and etc.

In this paper, we study the sensitivity  of a linear function of the LSE solution $x$ to perturbations on the date $A$, $C$, $ b$ and $d$, which is defined as
\begin{align}\label{eq:g dfn}
\Psi: \R^{m\times n} \times \R^{p\times n} \times
\R^m  \times \R^p &\rightarrow \R^k\\
\Psi(A,\,C,\,b,\,d)&:=\call\left(\bfK
b+C_A^\dagger d\right),\nonumber
\end{align}
where  $\call$ is an $k$-by-$n$, $k \le n$, matrix introduced for
the selection of the solution components. For example, when
$\call= I_n$ ($k=n$), all the $n$ components of the solution $x$
are equally selected. When $\call= e_i$ ($k=1$), the $i$th unit
vector in $\R^n$, then only the $i$th component of the
solution is selected. In the remainder of this paper, we always suppose that $\call$ is not numerically perturbed.

This paper is devoted to obtain the explicit expressions for   mixed and componentwise condition numbers of the linear function of the solution when perturbations on data are measured componentwise and the perturbations on the solution are measured either componentwise or normwise by means of the dual techniques \cite{22.0}. In particular, as also mentioned in \cite{22.0}, the dual techniques enable us to derive condition numbers by maximizing a linear function over a space of smaller dimension than the data space. And sharp upper bounds also are obtained, which can be estimated efficiently via the classical Hager-Higham algorithm~\cite{HagerCond84,HighamFortran88,HighamSISC90} during using the generalized QR factorization method \cite{CoxHigham99,Hammarling87,Paige90QR}  for solving LSE by means of utilizing the already computed matrix decompositions to reduce the computational complexity of condition estimations. Numerical examples in Section \ref{sec:nume ex} tell us,  under some situations, the  mixed and componentwise condition numbers of the linear function for LSE can be much smaller than the normwise condition numbers given in \cite{CoxHigham99,LiWang2016LSE}. The first order perturbation bounds based on the normwise condition numbers \cite{CoxHigham99,LiWang2016LSE} are pessimistic, while the first order perturbation bounds given by the proposed   mixed and componentwise condition numbers can give sharp perturbation bounds.

The paper is organized as follows. In Section \ref{sec:cond}, the dual techniques for deriving condition number  \cite{22.0} is reviewed and applied to LSE, then we propose the Hager-Higham algorithm \cite{HagerCond84,HighamFortran88,HighamSISC90} to estimate the upper bounds for the mixed and componentwise condition numbers for the linear function of the solution of LSE by taking account of the already computed matrix decompositions during solving LSE by means of the generalized QR factorization method \cite{CoxHigham99,Hammarling87,Paige90QR}. We do some numerical examples to show the effectiveness of the proposed condition numbers in Section \ref{sec:nume ex}. At end, in Section \ref{sec:con} concluding remarks are drawn.

\section{Mixed and componentwise condition numbers for LSE}\label{sec:cond}

In this section we will derive the explicit condition numbers expressions for a linear function of the solution of LSE by means of the dual techniques under componentwise perturbations, which is introduced in \cite{22.0}. Also sharp upper bounds for the mixed and componentwise condition numbers are obtained. Through using the already computed decomposition of the generalized QR factorization method \cite{CoxHigham99,Hammarling87,Paige90QR} for solving LSE, we can estimated upper bounds efficiently via the Hager-Higham algorithm~\cite{HagerCond84,HighamFortran88,HighamSISC90}.

\subsection{Dual techniques}

For the Euclidean spaces $\cX$  and  $\cY$  equipped scalar products $\langle \cdot ,\cdot \rangle_{\cX}$ and  $\langle\cdot,\cdot \rangle_{\cY}$ respectively, let a linear operator $ L:\, \cX \rightarrow \cY $ be well defined. We denote the corresponding norm norms $\|\cdot\|_{\cX} $ and $\|\cdot\|_{\cY}$ respectively. The well-known adjoint operator and dual norm are defined as follows.

\begin{definition}\label{def:adj dual}
The adjoint operator of  $L$,\, $L^{\ast}:\cY \rightarrow \cX $  is defined by
\[
\langle \bv,L\bu \rangle_{\cY}=\langle L^{\ast}\bv,\bu\rangle_{\cX}
\]
\end{definition}
where $\bu\in \cX $ and $\bv \in\cY $.  The dual norm $ \|\cdot\|_{\cX^{\ast}}$  of  $ \|\cdot\|_{\cX} $  is defined by
\[
\|\bu\|_{\cX^{\ast}}=\max_{\bw\neq0}\frac{ \langle  \bu,\bw \rangle _{\cX}}{\|\bw\|_{\cX}}
\]
and the dual norm $\|.\|_{\cY^{\ast}}$ can be defined similarly.

For the common vector norms with respect to the canonical scalar product in $ \R^{n} $, their dual norms  are  given by:
\[
\|\cdot \|_{{1}^\ast}=\|\cdot\|_{\infty}, \quad  \|\cdot \|_{{\infty}^\ast}=\|\cdot\|_{1}\quad \mbox{and } \quad \|\cdot\|_{{2}^\ast}=\|\cdot\|_{2}.
\]

For the matrix norms in $\R^{m\times n}$   with respect to the scalar product $ \langle A,B\rangle = \trace(A^\top B)$, where $\trace(A)$ is the trace of $A$, we have  $ \|A\|_{F}\ast=\|A\|_{F}$ since  $\trace(A^\top A)=\|A\|_{F}^{2}$, where $\|\cdot\|_F$ is Frobenius norm.

For the linear operator from $\cX$  to $\cY$, let $\|\cdot\|_{\cX,\cY}$  be the operator norm induced by
the norms  $\|\cdot\|_{\cX}$ and  $\|\cdot\|_{\cY}$. Consequently, for linear operators from $\cY$ to $\cX$, the norm
induced from the dual norms $\| \cdot \|_{\cX^\ast}$ and
$\| \cdot \|_{\cY^\ast}$, is denoted by $\| \cdot \|_{\cY^\ast,\cX^\ast}$.

We have the following  result for the adjoint operators and dual norms \cite{22.0}.
\begin{lemma}\label{lemma:adjoint}
$$
\|L\|_{\cX,\cY}=\|L^{\ast}\|_{\cY^\ast,\cX^\ast}.
$$
\end{lemma}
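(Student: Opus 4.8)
The plan is to prove the identity $\|L\|_{\cX,\cY}=\|L^{\ast}\|_{\cY^\ast,\cX^\ast}$ by unfolding both operator norms as double suprema over the unit balls (or over nonzero vectors) and the dual norms as suprema of inner products, and then observing that everything reduces to a single quantity, namely $\sup\{\langle \bv, L\bu\rangle_{\cY}\}$ taken over pairs with $\|\bu\|_{\cX}\le 1$ and $\|\bv\|_{\cY^\ast}\le 1$, which is symmetric in the roles played by $L$ and $L^\ast$ via the adjoint relation in Definition~\ref{def:adj dual}.

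Concretely, first I would write, using the definition of the dual norm applied in $\cY$,
\[
\|L\|_{\cX,\cY}=\sup_{\|\bu\|_{\cX}\le 1}\|L\bu\|_{\cY}
=\sup_{\|\bu\|_{\cX}\le 1}\ \sup_{\|\bv\|_{\cY^\ast}\le 1}\langle \bv, L\bu\rangle_{\cY},
\]
where the inner equality is exactly the statement that the double dual norm recovers the original norm on a finite-dimensional space, i.e. $\|\bw\|_{\cY}=\sup_{\|\bv\|_{\cY^\ast}\le 1}\langle \bv,\bw\rangle_{\cY}$. Then I would apply Definition~\ref{def:adj dual} to rewrite $\langle \bv,L\bu\rangle_{\cY}=\langle L^{\ast}\bv,\bu\rangle_{\cX}$, swap the order of the two suprema (legitimate since the expression is a supremum of a family indexed by the product of the two balls), and collapse the supremum over $\bu$ first:
\[
\sup_{\|\bv\|_{\cY^\ast}\le 1}\ \sup_{\|\bu\|_{\cX}\le 1}\langle L^{\ast}\bv,\bu\rangle_{\cX}
=\sup_{\|\bv\|_{\cY^\ast}\le 1}\|L^{\ast}\bv\|_{\cX^{\ast\ast}}
=\sup_{\|\bv\|_{\cY^\ast}\le 1}\|L^{\ast}\bv\|_{\cX},
\]
again using that $\cX$ is finite-dimensional so that $\|\cdot\|_{\cX^{\ast\ast}}=\|\cdot\|_{\cX}$. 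The right-hand side is by definition $\|L^{\ast}\|_{\cY^\ast,\cX^\ast}$, which completes the chain of equalities.

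The main technical point — and the one place I would be careful — is justifying the reflexivity facts $\|\bw\|_{\cY}=\sup_{\|\bv\|_{\cY^\ast}\le1}\langle\bv,\bw\rangle_{\cY}$ and its $\cX$-analogue, together with the interchange of the two suprema. The interchange is harmless because $\sup_{a}\sup_{b}f(a,b)=\sup_{b}\sup_{a}f(a,b)=\sup_{(a,b)}f(a,b)$ holds for any real-valued function, with no measurability or integrability concerns. The reflexivity statements hold because $\cX$ and $\cY$ are finite-dimensional Euclidean spaces with genuine norms, so the bidual norm equals the original norm (a consequence of the Hahn–Banach separation theorem, or in this concrete setting simply the fact that the dual of the dual basis/geometry returns the original). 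One should also note that $L^\ast$ is well defined and linear: for fixed $\bv$, the map $\bu\mapsto\langle \bv,L\bu\rangle_{\cY}$ is a linear functional on $\cX$, hence represented by a unique vector $L^\ast\bv$ via the Riesz representation in $(\cX,\langle\cdot,\cdot\rangle_{\cX})$, and linearity of $L^\ast$ follows from uniqueness. With these standard facts in hand, the proof is the four-line display above and requires no further computation.
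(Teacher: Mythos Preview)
Your argument is correct in spirit, and in fact the paper does not supply a proof of this lemma at all: it is quoted from \cite{22.0} (Baboulin--Gratton) without proof. So your derivation goes beyond what the paper does.

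There is, however, one notational slip in your chain. After swapping the suprema you write
\[
\sup_{\|\bu\|_{\cX}\le 1}\langle L^{\ast}\bv,\bu\rangle_{\cX}=\|L^{\ast}\bv\|_{\cX^{\ast\ast}},
\]
and then invoke reflexivity to replace $\|\cdot\|_{\cX^{\ast\ast}}$ by $\|\cdot\|_{\cX}$. But by Definition~\ref{def:adj dual} the supremum on the left is already $\|L^{\ast}\bv\|_{\cX^{\ast}}$, not $\|L^{\ast}\bv\|_{\cX^{\ast\ast}}$: you are maximizing the pairing over the $\|\cdot\|_{\cX}$-unit ball, which produces the dual norm, not the bidual. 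With that correction the last display reads
\[
\sup_{\|\bv\|_{\cY^\ast}\le 1}\|L^{\ast}\bv\|_{\cX^{\ast}}=\|L^{\ast}\|_{\cY^\ast,\cX^\ast}
\]
directly, and no appeal to reflexivity in $\cX$ is needed. (Reflexivity is still used once, namely for $\cY$, in the step $\|L\bu\|_{\cY}=\sup_{\|\bv\|_{\cY^\ast}\le 1}\langle \bv,L\bu\rangle_{\cY}$.) As written, your version would end at $\sup_{\|\bv\|_{\cY^\ast}\le 1}\|L^{\ast}\bv\|_{\cX}=\|L^{\ast}\|_{\cY^\ast,\cX}$, which is not the claimed quantity. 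Once the index is fixed, the proof is complete and standard.
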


As mentioned in \cite{22.0}, it may be more desirable  to compute $ \|L^{\ast}\|_{\cY^\ast,\cX^\ast} $ instead of $\|L\|_{\cX,\cY}$
when the dimension of the Euclidean space $\cY^{\ast}$  is  lower  than  $\cX$ because it implies a maximization over a space of smaller dimension.

Let $ \cX=\cX_{1}\times \cdots\times \cX_{s} $ be a product space, where each Euclidean space  $\cX_{i}$ is equipped with the scalar product  $\langle\cdot,\cdot \rangle_{\cX_{i}}$ and the corresponding norm $\|\cdot\|_{\cX_{i}}$.  The following scalar product
\[
\langle (\bu_{1},\cdots,\bu_{s}),(\bv_{1},\cdots,\bv_{s})\rangle=\langle \bu_{1},\bv_{1}\rangle_{\cX_{1}}+\cdots+\langle \bu_{s},\bv_{s}\rangle_{\cX_{s}},
\]
and the corresponding product norm
\[
\|(\bu_{1},\cdots,\bu_{s})\|_{v}=v(\|\bu_{1}\|_{\cX_{1}},\cdots,\|\bu_{s}\|_{\cX_{s}}),
\]
are defined in  $\cX$,  where   $v$  is an absolute norm \cite{Higham2002Book}  on  $\R^{s}$, that is  $ v(|\bu|)=v(\bu)$, for any  $\bu\in \R^{s}$. We denote   $v^{\ast}$ is the dual norm of $v$ with respect to the canonical inner-product of $ \R^{s} $  and we are interested in determining the dual $ \|\cdot\|_{v^\ast}$ of the product norm $\|\cdot\|_{v}$  with respect to the scalar product of  $\cX$. The following result can be found in \cite{22.0}.

\begin{lemma}\label{lemmaProductNorm}
The dual of the product norm can be expressed by
\[
\|(\bu_{1},\cdots,\bu_{s})\|_{v^\ast}=v(\|\bu_{1}\|_{\cX_{1^\ast}},\cdots,\|\bu_{s}\|_{\cX_{s^\ast}}).
\]
\end{lemma}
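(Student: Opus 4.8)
The plan is to evaluate the dual norm straight from its definition and to split the product structure of $\cX$ into one scalar problem on $\R^{s}$ together with $s$ independent dual-norm computations, one in each factor $\cX_{i}$. First I would write, using the definition of the dual norm attached to the product scalar product of $\cX$ (see Definition~\ref{def:adj dual}),
\[
\|(\bu_{1},\cdots,\bu_{s})\|_{v^{\ast}}
=\max_{(\bw_{1},\cdots,\bw_{s})\neq 0}
\frac{\langle \bu_{1},\bw_{1}\rangle_{\cX_{1}}+\cdots+\langle \bu_{s},\bw_{s}\rangle_{\cX_{s}}}
{v\bigl(\|\bw_{1}\|_{\cX_{1}},\cdots,\|\bw_{s}\|_{\cX_{s}}\bigr)} ,
\]
and dispose first of the degenerate case $\bu_{1}=\cdots=\bu_{s}=0$, which is trivial.

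For the upper bound I would estimate each numerator term by the defining inequality of the dual norm in its factor, $\langle \bu_{i},\bw_{i}\rangle_{\cX_{i}}\le \|\bu_{i}\|_{\cX_{i^{\ast}}}\,\|\bw_{i}\|_{\cX_{i}}$, and then apply the analogous inequality for $v$ and $v^{\ast}$ on $\R^{s}$, namely $\sum_{i=1}^{s}a_{i}t_{i}\le v^{\ast}(a)\,v(t)$ with $a_{i}=\|\bu_{i}\|_{\cX_{i^{\ast}}}\ge 0$ and $t_{i}=\|\bw_{i}\|_{\cX_{i}}\ge 0$. Chaining the two shows that every quotient above is bounded by $v^{\ast}\bigl(\|\bu_{1}\|_{\cX_{1^{\ast}}},\cdots,\|\bu_{s}\|_{\cX_{s^{\ast}}}\bigr)$, hence so is the maximum.

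For the reverse inequality I would construct an optimal argument. By compactness of the unit spheres in the finite-dimensional spaces $\cX_{i}$, pick $\widehat{\bw}_{i}$ with $\|\widehat{\bw}_{i}\|_{\cX_{i}}=1$ and $\langle \bu_{i},\widehat{\bw}_{i}\rangle_{\cX_{i}}=\|\bu_{i}\|_{\cX_{i^{\ast}}}$, and pick $t=(t_{1},\cdots,t_{s})$ with $v(t)=1$ and $\sum_{i}a_{i}t_{i}=v^{\ast}(a)$, where $a_{i}=\|\bu_{i}\|_{\cX_{i^{\ast}}}$. Setting $\bw_{i}:=t_{i}\widehat{\bw}_{i}$ then makes the denominator equal to $v(t)=1$ and the numerator equal to $\sum_{i}t_{i}\|\bu_{i}\|_{\cX_{i^{\ast}}}=v^{\ast}(a)$, which forces equality and finishes the proof.

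The only step that needs care, and the real obstacle, is that the maximizer $t$ of $\langle a,\cdot\rangle/v(\cdot)$ on $\R^{s}$ may a priori have negative components, whereas I need $t_{i}=\|\bw_{i}\|_{\cX_{i}}\ge 0$. This is exactly where the hypothesis that $v$ is an \emph{absolute} norm enters: since $a\ge 0$ one has $\langle a,t\rangle\le\langle a,|t|\rangle$ and $v(|t|)=v(t)$, so $t$ may be replaced by $|t|$ without decreasing the quotient; the same property justifies restricting to $t\ge 0$ in the Hölder-type inequality used for the upper bound. With this observation the two bounds coincide and the identity of Lemma~\ref{lemmaProductNorm} follows.
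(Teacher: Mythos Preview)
Your argument is correct and is the standard two-sided estimate for this duality identity. The paper itself does not supply a proof of Lemma~\ref{lemmaProductNorm}: it simply quotes the result from Baboulin and Gratton~\cite{22.0}, so there is no in-paper argument to compare against. What you have written is essentially the proof one finds in that reference, including the key use of the absolute-norm hypothesis to force the maximizer $t\in\R^{s}$ into the nonnegative orthant so that its components can be realized as norms $\|\bw_{i}\|_{\cX_{i}}$.

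One remark on the statement rather than the proof: your computation yields $v^{\ast}\bigl(\|\bu_{1}\|_{\cX_{1}^{\ast}},\ldots,\|\bu_{s}\|_{\cX_{s}^{\ast}}\bigr)$ on the right-hand side, not $v(\cdots)$ as printed in the lemma. This is a typographical slip in the paper; its own application of the lemma in the derivation of~\eqref{eqnDualC} uses $\|\cdot\|_{\infty^{\ast}}=\|\cdot\|_{1}$, i.e.\ the dual outer norm $v^{\ast}$, which confirms that the formula you proved is the intended one.
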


In the next subsection, the explicit expressions for the condition numbers of LSE can de derived via the adjoint operators
and dual norms.  Firstly, the Euclidean space $\cX$ with
norm $\| \cdot \|_\cX$ can be regarded as the space of the input data in LSE. Secondly, $\cY$ with norm $\| \cdot \|_\cY$ can be viewed as the space of the solution in LSE. Then
the function $\Psi$ in (\ref{eq:g dfn}) is an operator from $\cX$
to $\cY$ and the condition number is the measurement of the
sensitivity of $\Psi$ to the perturbation in its input data.

Assuming that $\Psi$ is Fr{\'e}chet differentiable in neighborhood of  $\bu \in \cX$, from \cite{Rice} the absolute condition number of  $\Psi$ at $\bu\in \cX$  is given by
\[
\kappa(\bu)=\|\sd \Psi(\bu)\|_{\cX,\cY}=
\max_{\| \bz \|_\cX = 1} \|\sd \Psi( \bu ) \cdot \bz \|_\cY ,
\]
where $ \|\cdot\|_{\cX,\cY} $  is the operator norm induced by the norms  $ \|\cdot\|_{\cX} $ and  $ \|\cdot\|_{\cY}$ and $\sd \Psi(\bu )$ is the Fr{\'e}chet derivative of $\Psi$ at $\bu$. If the output $ \Psi(\bu) $ is nonzero, the \emph{relative condition number} of  $\bu$ at  $\bu\in \cX$ is defined as
\[
\kappa^{\rel}(\bu)=\kappa(\bu)\frac{\|\bu\|_{\cX}}{\|\Psi(\bu)\|_{\cY}}.
\]
The expression of  $\kappa(\bu)$  is related to the operator norm of the linear operator  $\sd \Psi(\bu)$. In view of
Lemma~\ref{lemma:adjoint}, the following expression of
$\kappa(\bu)$
\begin{equation} \label{eqnK}
 \kappa(\bu)=\max_{\|\sd \bu\|_{\cX}=1} \|\sd \Psi(\bu)\cdot \sd \bu\|_{\cY}=\max_{\|\bz \|_{\cY^\ast}=1}\|\sd \Psi(\bz)^{\ast}\cdot \bz\|_{\cX^\ast},
\end{equation}
can be deduced  in terms of adjoint operator and dual norm.

For a data space $ \cX=\R^{n}$, the following componentwise metric will be defined. For any input data $ \bu\in \cX$,  the subset $ \cX_{\bu} \in \cX$ is given by
$$
 \cX_{\bu}=\left \{\sd \bu\in \cX~|~ \sd \bu_{i}=0 \mbox{ whenever } \bu_{i}=0, 1\leq i\leq n \right \}.
$$
Thus in the situation of a componentwise perturbation analysis, the perturbation  $\sd \bu\in \cX_{\bu}$ of  $\bu$ is measured by using the following componentwise norm
\begin{equation}\label{eq:rr}
 \|\sd \bu\|_{c}=\min\{\omega,|\sd \bu_{i}|\leq \omega |\bu_{i}|,i=1,\ldots, n\},
\end{equation}
with respect to $\bu$. Equivalently, it is not difficult to check that  the componentwise relative norm has the following property
\begin{equation}\label{eqnCNorm}
\|\sd \bu\|_{c}=\max\left\{\frac{|\sd \bu_{i}|}{|\bu_{i}|},\bu_{i}\neq 0\right\}=\left\|\left(\frac{|\sd \bu_{i}|}{|\bu_{i}|}\right)\right\|_{\infty},
\end{equation}
where $\sd\bu \in \cX_\bu$.

In the next step, the explicit expression of the dual norm $\| \cdot \|_{c^\ast}$
of the componentwise norm $\| \cdot \|_c$ is given for the special choice of $\cX$, $\cX_i$, the absolute norm $v$ and the norm on $\cX_i$. We fix that the product space $\cX=\mathbb{R}^n$, $\cX_i=\mathbb{R}$, and the absolute norm $v=\| \cdot \|_{\infty}$. Setting the norm $\| \sd \bu_i \|_{\cX_i}$ in $\cX_i$ to
$| \sd \bu_i | / | \bu_i |$ when $\bu_i \neq 0$, from
Definition~\ref{def:adj dual}, we have the dual norm
\[
\| \sd \bu_i \|_{\cX_i^\ast} =
\max_{\bz \neq 0} \frac{| \sd \bu_i \cdot \bz |}{\| \bz \|_{\cX_i}} =
\max_{\bz \neq 0} \frac{| \sd \bu_i \cdot \bz |}{|\bz| / |\bu_i|} =
|\sd \bu_i| \, |\bu_i| .
\]
From Lemma~\ref{lemmaProductNorm} and (\ref{eqnCNorm}) and the property
$\| \cdot \|_{\infty^*} = \| \cdot \|_1$,  the explicit expression of the dual norm
\begin{equation} \label{eqnDualC}
\| \sd \bu \|_{c^\ast} =
\| (\|\sd \bu_1\|_{\cX^\ast} ,..., \|\sd \bu_n\|_{\cX^\ast}) \|_{\infty^\ast} =
\| (|\sd \bu_1| \, |\bu_1| ,..., |\sd \bu_n| \, |\bu_n|) \|_1
\end{equation}
is derived.

When we consider the componentwise perturbation $\sd \bu$ of the input data $\bu $, we always assume that $\sd \bu \in \cX_\bu$ and use  $\|\sd \bu\|_{c}$ defined in \eqref{eq:rr} to measure it. Following
(\ref{eqnK}), we have the following lemma on the explicit expression of the condition
number in terms of adjoint operator and dual norm.


\begin{lemma} \label{lemmaK}
Using the above notations and the componentwise norm defined
in (\ref{eqnCNorm}), the condition number $\kappa(\bu)$ can be
expressed by
\[
\kappa(\bu) =
\max_{\| \bz \|_{\cY^\ast} = 1}
\| (\sd \Psi(\bu))^* \cdot \bz \|_{c^\ast} ,
\]
where $\| \cdot \|_{c^\ast}$ is given by (\ref{eqnDualC}).
\end{lemma}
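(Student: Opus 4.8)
The plan is to obtain Lemma~\ref{lemmaK} as an immediate specialization of the abstract duality identity of Lemma~\ref{lemma:adjoint}, combined with the explicit description of the dual of the componentwise norm recorded in (\ref{eqnDualC}). The starting point is the observation that, once the data space (more precisely the subspace $\cX_\bu$ of admissible perturbations) is endowed with the componentwise norm $\|\cdot\|_c$ of (\ref{eqnCNorm}), the absolute condition number is \emph{by definition} the corresponding induced operator norm of the Fr\'echet derivative,
\[
\kappa(\bu)=\|\sd\Psi(\bu)\|_{\cX,\cY}=\max_{\|\sd\bu\|_{c}=1}\|\sd\Psi(\bu)\cdot\sd\bu\|_{\cY};
\]
so the claimed formula is nothing more than a rewriting of this supremum in ``dual form''.

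Concretely, I would proceed in three short steps. First, apply Lemma~\ref{lemma:adjoint} to the linear operator $L=\sd\Psi(\bu)$, with the norm on $\cX$ taken to be $\|\cdot\|_c$; this yields $\kappa(\bu)=\|(\sd\Psi(\bu))^{\ast}\|_{\cY^\ast,\cX^\ast}$, where the adjoint is taken with respect to the canonical scalar products. Second, unfold the definition of the induced operator norm, obtaining
\[
\kappa(\bu)=\max_{\bz\neq0}\frac{\|(\sd\Psi(\bu))^{\ast}\cdot\bz\|_{\cX^\ast}}{\|\bz\|_{\cY^\ast}}=\max_{\|\bz\|_{\cY^\ast}=1}\|(\sd\Psi(\bu))^{\ast}\cdot\bz\|_{\cX^\ast},
\]
which is exactly (\ref{eqnK}) in the present setting. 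Third, identify $\|\cdot\|_{\cX^\ast}$: since $\cX$ is the product space used just before (\ref{eqnDualC}), with outer absolute norm $v=\|\cdot\|_\infty$ and each factor carrying the weighted norm $|\cdot|/|\bu_i|$, Lemma~\ref{lemmaProductNorm} together with the pointwise computation already performed shows that $\|\cdot\|_{\cX^\ast}$ coincides with $\|\cdot\|_{c^\ast}$ of (\ref{eqnDualC}); substituting this into the last display gives the asserted expression for $\kappa(\bu)$.

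The step that I expect to require the most care is the bookkeeping around the admissible subspace $\cX_\bu$. The map $\|\cdot\|_c$ is not a norm on all of $\cX$ --- a perturbation supported outside the support of $\bu$ has vanishing $\|\cdot\|_c$ --- so Lemma~\ref{lemma:adjoint} must be invoked for the operator $\sd\Psi(\bu)$ restricted to $\cX_\bu$, and one has to verify that $\|\cdot\|_c$ is genuinely a norm there, that the restricted derivative is the relevant linear operator, and that its adjoint lands in $\cX_\bu^\ast$ with dual norm given precisely by the support-restricted one-norm appearing in (\ref{eqnDualC}). Once this identification is made explicit, the remaining manipulations are purely formal consequences of Lemmas~\ref{lemma:adjoint} and~\ref{lemmaProductNorm}, and no computational difficulty remains.
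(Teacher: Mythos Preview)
Your proposal is correct and follows exactly the route the paper takes: the paper does not even spell out a separate proof of Lemma~\ref{lemmaK}, but simply states it as following from (\ref{eqnK}) together with the identification (\ref{eqnDualC}) of the dual of the componentwise norm, which is precisely your three-step argument. Your extra care about restricting to the admissible subspace $\cX_\bu$ (so that $\|\cdot\|_c$ is a genuine norm) is a point the paper glosses over, so if anything your write-up is more complete than the original.
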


In the next subsection, we derive the explicit expressions for condition numbers based on Lemma \ref{lemmaK}. We always measure the errors for the solution under componentwise perturbation analysis, while for the input data, the error can be measured either componentwisely or normwisely.


\subsection{Deriving condition number expressions via dual techniques}

In this subsection we will derive the explicit expressions of condition numbers for LSE through dual techniques stated in the previous subsection. Firstly, we prove the linear function $\Psi$ defined by \eqref{eq:g dfn}  is Fr\'{e}chet
differentiable  and its Fr\'{e}chet derivative is obtained by means of matrix differential calculus \cite{Magnus} in Lemma \ref{lemma:gLSQI}. Before that, we need the following lemma.

\begin{lemma}{\rm\cite[Page 171, Theorem 3]{Magnus}}\label{Lemma:inverse df}
  Let $T$ be the set of non-singular real $m\times m$ matrices, and $S$ be an open subset of $\R^{n\times q}$. If the matrix function
  $F:S\rightarrow T$ is $k$ times (continuously) differentiable on $S$, then so is the matrix function $F^{-1}:S\rightarrow T$ defined by $F^{-1}(X)=\left(F(X)\right)^{-1}$, and
  $$
  \sd F^{-1}=-F^{-1}(\sd F)F^{-1}.
  $$
\end{lemma}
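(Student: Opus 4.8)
The plan is to separate the two assertions of the lemma: the $C^k$ regularity of $F^{-1}$, and the formula $\sd F^{-1}=-F^{-1}(\sd F)F^{-1}$. I would get the first from the chain rule once I know that matrix inversion itself is infinitely differentiable, and the second by differentiating the defining identity $F(X)\,F^{-1}(X)=I_m$.

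First I would record that the inversion map $\iota\colon T\to T$, $\iota(Y)=Y^{-1}$, is $C^\infty$ on the open set $T$ of nonsingular real $m\times m$ matrices. Indeed, by Cramer's rule every entry of $Y^{-1}$ is, up to sign, a cofactor of $Y$ divided by $\det Y$, hence a rational function of the entries of $Y$ whose denominator is nonzero throughout $T$; such a function is infinitely differentiable on $T$. Since $F\colon S\to T$ is $C^k$ by hypothesis and $F^{-1}=\iota\circ F$, the composition rule for $C^k$ maps between open subsets of finite-dimensional spaces gives that $F^{-1}\colon S\to T$ is $C^k$ on $S$.

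Next I would compute the differential from the identity $F(X)\,F^{-1}(X)=I_m$, which holds for every $X\in S$ since $F(X)\in T$. Applying $\sd(\cdot)$ and using the Leibniz rule for the differential of a matrix product, $\sd(GH)=(\sd G)\,H+G\,(\sd H)$, together with $\sd I_m=0$, yields $(\sd F)\,F^{-1}+F\,(\sd F^{-1})=0$; multiplying on the left by $F^{-1}$ and rearranging gives $\sd F^{-1}=-F^{-1}(\sd F)F^{-1}$. Higher-order differentiability of this expression then follows by iteration: differentiate the first-order formula and reuse the product rule together with the already established $C^{k-1}$ regularity of $\sd F^{-1}$.

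The only steps needing a word of justification are the validity of the Leibniz rule for matrix-valued maps — which is immediate from the bilinearity of matrix multiplication applied entrywise to the ordinary product rule — and the legitimacy of left-multiplying by $F^{-1}(X)$, which is fine because $F(X)$ is nonsingular at every $X\in S$. Neither is a genuine obstacle; essentially all the content of the lemma sits in the elementary observation that $\iota$ is smooth, with the rest being bookkeeping via the chain and product rules.
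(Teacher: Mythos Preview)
Your argument is correct and is the standard proof of this fact. Note, however, that the paper does not actually prove this lemma: it is quoted verbatim from Magnus and Neudecker \cite[Page 171, Theorem 3]{Magnus} and simply invoked as a tool in the proof of Lemma~\ref{lemma:gLSQI}. There is therefore no ``paper's own proof'' to compare against; your write-up supplies a self-contained justification where the paper is content to cite the reference.
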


\begin{lemma}\label{lemma:gLSQI}
The function $\Psi$ is a continuous mapping on $\R^{m\times n} \times \R^{p\times n} \times
\R^m  \times \R^p$. In addition, $\Psi$ is Fr\'{e}chet
differentiable at $(A,\, C,\, b,\, d)$ and its Fr\'{e}chet derivative is given by
\begin{align}\label{eqnJ}
J&:=\sd {\Psi}(A,\,C,\,b,\,d)\cdot (\sd A,\, \sd C,\, \sd b,\, \sd d)=-\call\bfK\sd A x+ \call \bfK\bfK^\rt
(\sd A)^\rt r\nonumber \\
&\quad - \call C_A^\dagger
\sd C x-\call \bfK\bfK^\rt(\sd C)^\rt (AC_A^\dagger)^\top r
+\call \bfK\sd b+\call C_A^\dagger \sd d \nonumber \\
&:=J_{1}(\sd A)+J_{2}(\sd C)+J_{3}(\sd b)+J_{4}(\sd d),
\end{align}
where $\sd A \in \R^{m\times n},\, \sd C \in ^{p\times n}, \, \sd b\in \R^m$ and $\sd d \in \R^p$.
\end{lemma}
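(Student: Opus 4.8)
The plan is to establish continuity and Fréchet differentiability of $\Psi$ by decomposing it through the explicit formula $\Psi(A,C,b,d)=\call(\bfK b + C_A^\dagger d)$ and tracking how each building block ($\bfP$, $\bfK$, $C_A^\dagger$) depends on the data. The cleanest route is to avoid differentiating the Moore--Penrose inverses directly via the classical (and delicate) formulas for $\sd(X^\dagger)$, and instead to pass to the augmented system \eqref{eq:aug}. Under the rank condition \eqref{eq:rank condition} the block matrix $\bfA$ is nonsingular, and its entries are polynomial (hence $C^\infty$) in $A$ and $C$; the right-hand side $\bf b$ depends linearly on $b$ and $d$. Since matrix inversion is $C^\infty$ on the open set of nonsingular matrices (this is exactly Lemma~\ref{Lemma:inverse df}), the composite map $(A,C,b,d)\mapsto {\bf x}=\bfA^{-1}{\bf b}$, and in particular its last block component $x$, is $C^\infty$ on a neighborhood of any admissible data point; composing with the fixed linear map $\call$ preserves this. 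Continuity then follows a fortiori, giving the first assertion. This step I expect to be routine.

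The main work is to extract the \emph{explicit} form \eqref{eqnJ} of the derivative. First I would differentiate ${\bf x}=\bfA^{-1}{\bf b}$ using $\sd\bfA^{-1}=-\bfA^{-1}(\sd\bfA)\bfA^{-1}$ from Lemma~\ref{Lemma:inverse df}, obtaining
\[
\sd{\bf x} = \bfA^{-1}\bigl(\sd{\bf b} - (\sd\bfA)\,{\bf x}\bigr),
\]
where $\sd\bfA$ has the three-block structure of \eqref{eq:aug} with $C,A$ replaced by $\sd C,\sd A$ (the $I_m$ block is constant), $\sd{\bf b}=(\sd d,\sd b,{\bf 0})^\rt$, and ${\bf x}=(\lambda,r,x)^\rt$. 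Then I would substitute the explicit inverse \eqref{eq:lse inverse} and read off the last ($x$-)block of $\sd{\bf x}$. Carrying out the block multiplication, the contribution $-(\sd\bfA){\bf x}$ has blocks $(-\sd C\,x,\ -\sd A\,x,\ -\sd C^\rt\lambda-\sd A^\rt r)$; combined with $\sd{\bf b}$ and hit on the left by the third block-row $[\,C_A^\dagger,\ \bfK,\ -((A\bfP)^\rt(A\bfP))^\dagger\,]$ of $\bfA^{-1}$, one collects the six terms. The terms from $\sd b,\sd d$ give $\bfK\sd b$ and $C_A^\dagger\sd d$ directly. The $\sd A$ terms are $-\bfK\,\sd A\,x$ together with $\bfK(\sd A)^\rt r$ coming from the $\bfK$ entry acting on $-\sd A^\rt r$; here one uses $\bfK=(A\bfP)^\dagger$ and the identity $(A\bfP)^\dagger = \bigl((A\bfP)^\rt(A\bfP)\bigr)^\dagger(A\bfP)^\rt$, i.e. $\bfK=\bfK\bfK^\rt(A\bfP)^\rt$, to rewrite $\bfK r$-type contributions; after simplification (using $\bfP r = r$ type relations, which hold because $r\perp\range(A\bfP)^\perp$ appropriately) one lands on $\call\bfK\bfK^\rt(\sd A)^\rt r$. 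The $\sd C$ terms are handled the same way: $C_A^\dagger$ acting on $-\sd C\,x$ gives $-C_A^\dagger\sd C\,x$, while the $\bfK$ and $((A\bfP)^\rt(A\bfP))^\dagger$ entries acting on $-\sd C^\rt\lambda$ produce, after using $\lambda = -(AC_A^\dagger)^\rt r$ (which can be extracted from the first block-row of \eqref{eq:lse inverse} applied to $\bf b$, or equivalently from the augmented equations), the term $-\call\bfK\bfK^\rt(\sd C)^\rt(AC_A^\dagger)^\rt r$.

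The hard part will be the bookkeeping in this last reduction: matching the raw output of the block matrix product against the compact six-term expression requires several Moore--Penrose identities ($\bfP=\bfP^\rt=\bfP^2$, $\bfK A\bfP=$ the relevant projector, $C_A^\dagger = (I_n-\bfK A)C^\dagger$, and the relation between $\lambda,r$ and the data), and care that $r=r(A,C,b,d)$ and $\lambda=\lambda(A,C,b,d)$ are themselves functions of the data — but since we have already differentiated ${\bf x}=(\lambda,r,x)$ as a whole, the terms involving $\sd\lambda$ and $\sd r$ do not appear; only the "frozen" values $\lambda, r$ enter, which is what makes the final formula clean. Once \eqref{eqnJ} is assembled, splitting it as $J_1(\sd A)+J_2(\sd C)+J_3(\sd b)+J_4(\sd d)$ according to which perturbation each term carries is immediate by linearity, completing the proof.
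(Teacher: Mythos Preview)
Your approach is essentially identical to the paper's: differentiate the augmented system ${\bf x}=\bfA^{-1}{\bf b}$ via Lemma~\ref{Lemma:inverse df}, read off the $x$-block using the explicit inverse \eqref{eq:lse inverse}, and then simplify with $((A\bfP)^\rt(A\bfP))^\dagger=\bfK\bfK^\rt$ and $\lambda=-(AC_A^\dagger)^\rt r$ (the paper obtains the latter from the third block equation $C^\rt\lambda+A^\rt r={\bf 0}$ together with $(AC^\dagger)^\rt r=(AC_A^\dagger)^\rt r$ from Cox--Higham). Two small bookkeeping slips to correct when you write it out: the $(\sd A)^\rt r$ term arises from the $(3,3)$ entry $-((A\bfP)^\rt(A\bfP))^\dagger$ of $\bfA^{-1}$ acting on $-\sd A^\rt r$, not from the $\bfK$ entry, and there is no ``$\bfP r=r$'' relation (it is not even dimensionally meaningful, since $\bfP\in\R^{n\times n}$ and $r\in\R^m$)---no such identity is needed, the simplification is purely $((A\bfP)^\rt(A\bfP))^\dagger=\bfK\bfK^\rt$.
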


\begin{proof} From \eqref{eq:aug}, we know that ${\bf x}=\bfA^{-1} \bf b$. Since $\cal A$ is invertible, the linear operator $\Psi$ defined in \eqref{eq:g dfn} is continuously Fr{\'e}chet differentiable  in a neighborhood of the data $(A,\,C,\, b,\,d)$ from the theory of the matrix differential calculus \cite{Magnus}. With Lemma~\ref{Lemma:inverse df}, we can deduce that
$$
\sd {\bf x}=(\sd \bfA^{-1}) \bf b +\bfA^{-1} \sd \bf b =-\bfA^{-1} (\sd \bfA) \bf x+\bfA^{-1} \sd \bf b.
$$
Noting that
$$
\sd {\bf x}=\begin{bmatrix} \sd \lambda\cr \sd r\cr \sd x \end{bmatrix},\quad {\sd \bf b}=\begin{bmatrix}
  \sd d\cr \sd b\cr \bf0
\end{bmatrix},\quad \sd \bfA=\begin{bmatrix}\bf0&\bf0&\sd C\cr
\bf0&\bf0& \sd A \cr
\sd C^\rt & \sd A^\rt&\bf0\end{bmatrix},
$$
recalling \eqref{eq:lse inverse}, and after some algebraic operations we deduce that
\begin{align}\label{eq:lse dx}
\sd x & =\bfK\sd b+C_A^\dagger \sd d- C_A^\dagger
\sd C x -\bfK\sd A x+ ((A\bfP)^\rt (A\bfP))^\dagger
(\sd A)^\rt r\\
&+((A\bfP)^\rt (A\bfP))^\dagger (\sd C)^\rt \lambda.\nonumber
\end{align}
Thus $\sd \Psi=\call \sd x$ since $\Psi$ is linear. From \eqref{eq:aug}, it can verified that $C^\rt \lambda +A^\rt r=0$, since $C$ has full row rank ($CC^\dagger=I_{p}$) it is easy to see that $
\lambda=-(AC^\dagger)^\rt r$. Substituting the above expression in \eqref{eq:lse dx}, using the equality $(AC^\dagger)^\top r=(AC_A^\dagger)^\top r$ from Lemma 4.2 in \cite{CoxHigham99}, and noting $((A\bfP)^\rt (A\bfP))^\dagger=\bfK\bfK^\rt$, we can complete the proof of this lemma.
\hfill $\Box$
\end{proof}

Using the definition of the adjoint operator and
the classical definition of the scalar
product in the data space $\R^{m\times n} \times \R^{p\times n} \times
\R^m  \times \R^p$, an explicit
expression of the adjoint operator of the above $J(\sd A,\, \sd C, \, \sd b, \sd d)$ is given in the following lemma.

\begin{lemma}\label{lemmaDualJ}
The adjoint of operator of the Fr{\'e}chet derivative $J(\sd A,\, \sd C, \, \sd b, \sd d)$ in \eqref{eqnJ} is given by
\begin{align*}
J^{\ast}&:\R^{k}\rightarrow \R^{m\times n} \times \R^{p\times n} \times
\R^m  \times \R^p\\
 &u\mapsto \left(r u^\top\call \bfK\bfK^\rt - \bfK^\rt\call^\rt u x^\top,\,-xu^\top\call C_A^\dagger -   \bfK\bfK^\rt \call^\top u r^\top AC_A^\dagger,\,\bfK^\top \call^\top  u,\right.\\
 &\left.\quad \quad \quad (C_A^\dagger)^\top \call^\top  u\right).
\end{align*}
\end{lemma}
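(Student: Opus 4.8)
The plan is to invoke the definition of the adjoint (Definition~\ref{def:adj dual}) directly and read off the four blocks of $J^{\ast}(u)$ one term at a time. Thus $J^{\ast}$ is the unique linear map $\R^{k}\to\R^{m\times n}\times\R^{p\times n}\times\R^{m}\times\R^{p}$ satisfying
$$
\big\langle u,\,J(\sd A,\sd C,\sd b,\sd d)\big\rangle_{\R^{k}}
=\big\langle J^{\ast}(u),\,(\sd A,\sd C,\sd b,\sd d)\big\rangle
\quad\text{for all }u,\,\sd A,\,\sd C,\,\sd b,\,\sd d,
$$
where the right-hand scalar product is the product scalar product of Section~\ref{sec:cond}, assembled from $\langle X,Y\rangle=\trace(X^{\rt}Y)$ on each matrix factor and the canonical dot product on each vector factor. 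Because \eqref{eqnJ} splits as $J=J_{1}(\sd A)+J_{2}(\sd C)+J_{3}(\sd b)+J_{4}(\sd d)$ and each summand depends on only one argument, the adjoint decomposes in the same way: computing $J_{i}^{\ast}(u)$ term by term and placing the outcome in the $i$-th coordinate of the product space gives $J^{\ast}(u)$.

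For each of the six scalar expressions appearing in \eqref{eqnJ}, I would write $\langle u,\,(\text{term})\rangle_{\R^{k}}$ as a trace and use cyclic invariance to move all factors except the perturbation to one side. Two templates cover every case. A term of the form $u^{\rt}M\,(\sd X)\,v$ (for instance $-\call\bfK\,\sd A\,x$ with $M=\call\bfK$, $v=x$, and $-\call C_{A}^{\dagger}\,\sd C\,x$) equals $\trace(v\,u^{\rt}M\,\sd X)=\big\langle (v\,u^{\rt}M)^{\rt},\,\sd X\big\rangle=\big\langle M^{\rt}u\,v^{\rt},\,\sd X\big\rangle$, contributing $M^{\rt}u\,v^{\rt}$ to the $\sd X$-block. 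A term of the form $u^{\rt}M\,(\sd X)^{\rt}v$ (the $\bfK\bfK^{\rt}(\sd A)^{\rt}r$ and $\bfK\bfK^{\rt}(\sd C)^{\rt}(AC_{A}^{\dagger})^{\rt}r$ parts) equals $\trace(v\,u^{\rt}M\,(\sd X)^{\rt})=\big\langle v\,u^{\rt}M,\,\sd X\big\rangle$, contributing $v\,u^{\rt}M$. For the two purely vector terms $\call\bfK\,\sd b$ and $\call C_{A}^{\dagger}\,\sd d$ one only needs $\langle u,\,Mv\rangle=\langle M^{\rt}u,\,v\rangle$. Adding up the contributions landing in each coordinate then produces the stated four blocks.

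I do not expect any conceptual obstacle: the argument is purely a matter of trace manipulations and transpose bookkeeping. The place that will need care is the two terms carrying $(\sd A)^{\rt}$ and $(\sd C)^{\rt}$, where one must keep track of which factor is transposed after the cyclic rearrangement and remember that, under the convention $\langle X,Y\rangle=\trace(X^{\rt}Y)$, one has $\trace(W\,\sd X)=\langle W^{\rt},\sd X\rangle$ whereas $\trace(W\,(\sd X)^{\rt})=\langle W,\sd X\rangle$; the fact that the $\sd A$-block must have size $m\times n$ and the $\sd C$-block size $p\times n$ is a useful consistency check on the orientation of each factor. The only remaining point is to make sure the product scalar product used is the one fixed in Section~\ref{sec:cond}, so that the four blocks are concatenated in the stated order without any extra weights.
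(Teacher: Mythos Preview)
Your proposal is correct and follows essentially the same approach as the paper: decompose $J$ into the four pieces $J_{1},\dots,J_{4}$ depending separately on $\sd A,\sd C,\sd b,\sd d$, compute each pairing $\langle u,J_{i}(\cdot)\rangle$ via trace manipulations and cyclic invariance, and read off the corresponding block of $J^{\ast}(u)$. Your two ``templates'' for terms of type $u^{\rt}M(\sd X)v$ and $u^{\rt}M(\sd X)^{\rt}v$ are exactly the calculations the paper carries out line by line, so there is no substantive difference between the two arguments.
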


\begin{proof}
Using \eqref{eqnJ} and the definition of the scalar product in the matrix space, for any $u\in \R^{k}$, we have
\begin{align*}
\langle u,J_{1}(\sd A)\rangle=&u^\top(\call \bfK\bfK^\rt
(\sd A)^\rt r-\call\bfK\sd A x)\\
=&\trace(ru^\top\call \bfK\bfK^\rt
(\sd A)^\rt )-\trace(xu^\top \call\bfK\sd A)\\
=&\langle r u^\top\call \bfK\bfK^\rt - \bfK^\rt\call^\rt u x^\top, \sd A\rangle.
\end{align*}
For the second part of the adjoint of the derivative  $J$, we have
\begin{align*}
\langle u,J_{2}(\sd C)\rangle=&-u^\top( \call C_A^\dagger
\sd C x+\call \bfK\bfK^\rt(\sd C)^\rt (AC_A^\dagger)^\top r)\\
=&-\trace(xu^\top\call C_A^\dagger
\sd C )-\trace( (AC_A^\dagger)^\top r u^\top \call \bfK\bfK^\rt(\sd C)^\rt)\\
=&-\langle xu^\top\call C_A^\dagger +   \bfK\bfK^\rt \call^\top u r^\top AC_A^\dagger, \sd C\rangle.
\end{align*}
For the third part of the adjoint of the derivative  $J$, we have
\begin{align*}
\langle u,J_{3}(\sd b)\rangle
=&u^\top \call \bfK\sd b=\langle  \bfK^\top \call^\top  u, \sd b\rangle.
\end{align*}
Similarly,  for the fourth part of the adjoint of the derivative  $J$, we have
\begin{align*}
\langle u,J_{4}(\sd d)\rangle
=&u^\top \call C_A^\dagger \sd d=\langle  (C_A^\dagger)^\top \call^\top  u, \sd d\rangle.
\end{align*}
Let
\begin{align*}
J_1^*({u})& =r u^\top\call \bfK\bfK^\rt - \bfK^\rt\call^\rt u x^\top,\quad J_2^*({u}) =-\left(xu^\top\call C_A^\dagger +   \bfK\bfK^\rt \call^\top u r^\top AC_A^\dagger\right),\\
J_3^*({u})& =\bfK^\top \call^\top  u,\quad J_4^*({u}) =(C_A^\dagger)^\top \call^\top  u,
\end{align*}
then
\begin{align*}
\langle J^*({u}),\ (\sd A,\, \sd C,\, \sd b,\, \sd d) \rangle &=
\langle (J_1^*({u}),\, J_2^*({u}),\, J_3^*({u}), \, J_2^*({u})) , \
(\sd A,\, \sd C,\, \sd b,\, \sd d) \rangle\\
& = \langle {u},\ J(\sd A,\, \sd C,\, \sd b,\, \sd d)  \rangle,
\end{align*}
which completes the proof. \hfill $\Box$
\end{proof}

After obtaining an explicit expression of the adjoint operator
of the Fr{\'e}chet derivative,  we now give an explicit expression
of the condition number $\kappa$ (\ref{eqnK}) in terms
the dual norm in the solution space in the following theorem. In the following, if $A\in \R^{m\times n}$ and $B \in \R^{p \times q}$, then the
{\em Kronecker product} $A \otimes B\in\R^{mp\times nq}$ is
defined by $A \otimes B = \left[a_{ij}B\right] \in \R^{mp\times nq}$~\cite{Graham1981book}. We denote $\vect(A)$ as the vector obtained by stacking
the columns of a matrix $A$  \cite{Graham1981book} and $D_A$ denotes the diagonal matrix
$\mathrm{diag}(\vect(A))$.

\begin{theorem} \label{thmK}
The condition number for the LSE
problem can be expressed by
\[
\kappa = \max_{\| {u} \|_{\cY^\ast} = 1}
\left\| [ \bfH D_A \ \ \bfJ D_{{C}}\ \ \bfK D_{{b}}\ \ C_A^\dagger D_{{d}}]^{\top}
\call^\top\right\|_{\cY^\ast,1} ,
\]
where
\begin{align} \label{eqnV}
 \bfH= (\bfK\bfK^\rt)\otimes
r^\rt-x^\rt\otimes \bfK,\quad \bfJ= x^\rt\otimes C_A^\dagger + (\bfK\bfK^\rt)\otimes
(r^\rt AC_A^\dagger).
\end{align}
\end{theorem}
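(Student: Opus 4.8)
The plan is to feed the explicit adjoint of Lemma~\ref{lemmaDualJ} into the dual formula of Lemma~\ref{lemmaK} and then collapse the componentwise dual norm on the data space into a single weighted $1$-norm by vectorizing and using Kronecker products. By Lemma~\ref{lemmaK}, with $\Psi$ evaluated at $\bu=(A,C,b,d)$,
\[
\kappa=\max_{\|u\|_{\cY^\ast}=1}\bigl\|J^\ast(u)\bigr\|_{c^\ast},
\]
where $J^\ast(u)=(J_1^\ast(u),J_2^\ast(u),J_3^\ast(u),J_4^\ast(u))$ is the adjoint from Lemma~\ref{lemmaDualJ}. Since $\cX=\R^{m\times n}\times\R^{p\times n}\times\R^m\times\R^p$ carries the componentwise metric, Lemma~\ref{lemmaProductNorm} together with \eqref{eqnDualC} gives that $\bigl\|J^\ast(u)\bigr\|_{c^\ast}$ is the sum of the four block weights $\|D_A\vect(J_1^\ast(u))\|_1$, $\|D_C\vect(J_2^\ast(u))\|_1$, $\|D_b\,J_3^\ast(u)\|_1$, $\|D_d\,J_4^\ast(u)\|_1$.

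The core step is to express each of these four vectors as one fixed matrix applied to $\call^\rt u$. It is cleanest to run the computation from the forward derivative \eqref{eqnJ}: the identity $\vect(PXQ)=(Q^\rt\otimes P)\vect(X)$ converts the terms $\call\bfK\,\sd A\,x$, $\call C_A^\dagger\,\sd C\,x$, $\call\bfK\,\sd b$ and $\call C_A^\dagger\,\sd d$, while the companion identity $P\,N^\rt q=(P\otimes q^\rt)\vect(N)$ converts the transposed-data terms $\call\bfK\bfK^\rt(\sd A)^\rt r$ and $\call\bfK\bfK^\rt(\sd C)^\rt(AC_A^\dagger)^\rt r$. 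Pulling $\call$ to the front via the mixed-product property, this gives $J=\call\,[\,\bfH\ \ -\bfJ\ \ \bfK\ \ C_A^\dagger\,]\,[\vect(\sd A)^\rt\ \vect(\sd C)^\rt\ \sd b^\rt\ \sd d^\rt]^\rt$ with $\bfH,\bfJ$ precisely as in \eqref{eqnV}. Taking the adjoint of the linear map $w\mapsto\call M w$ (equivalently, matching Lemma~\ref{lemmaDualJ} block by block) then yields $\vect(J_1^\ast(u))=\bfH^\rt\call^\rt u$, $\vect(J_2^\ast(u))=-\bfJ^\rt\call^\rt u$, $J_3^\ast(u)=\bfK^\rt\call^\rt u$, $J_4^\ast(u)=(C_A^\dagger)^\rt\call^\rt u$; the minus sign on $\bfJ$ is irrelevant below since only absolute values enter.

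Since $D_A,D_C,D_b,D_d$ are diagonal, $D_A\bfH^\rt=(\bfH D_A)^\rt$ and likewise for the other blocks, so the four summands are $\|(\bfH D_A)^\rt\call^\rt u\|_1$, $\|(\bfJ D_C)^\rt\call^\rt u\|_1$, $\|(\bfK D_b)^\rt\call^\rt u\|_1$, $\|(C_A^\dagger D_d)^\rt\call^\rt u\|_1$; and since vertical stacking preserves the $1$-norm, their sum equals $\bigl\|[\,\bfH D_A\ \ \bfJ D_C\ \ \bfK D_b\ \ C_A^\dagger D_d\,]^\rt\call^\rt u\bigr\|_1$. Maximizing over $\|u\|_{\cY^\ast}=1$, i.e.\ reading the result as the $(\cY^\ast,1)$ operator norm of $[\,\bfH D_A\ \ \bfJ D_C\ \ \bfK D_b\ \ C_A^\dagger D_d\,]^\rt\call^\rt$, gives the claimed expression. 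The one genuinely delicate point is the Kronecker/vec bookkeeping in the middle paragraph: one has to fix the correct ordering inside every $P\otimes q^\rt$ factor and keep the column-stacking of each $\vect(J_i^\ast(u))$ aligned with the conventions $D_A=\diag(\vect(A))$, $D_C=\diag(\vect(C))$ (recalling $A\in\R^{m\times n}$, $C\in\R^{p\times n}$), so that no vec-permutation (commutation) matrix is left over; the remaining steps are routine matrix algebra.
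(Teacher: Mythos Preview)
Your proof is correct and follows essentially the same approach as the paper's: both invoke Lemma~\ref{lemmaK}, evaluate the dual norm $\|J^\ast(u)\|_{c^\ast}$ via \eqref{eqnDualC}, and collapse the resulting weighted $1$-norm into the stated Kronecker form. The only cosmetic difference is that the paper does the identification entry-by-entry (checking that $r_i(\bfK\bfK^\rt e_j)^\rt-x_j(\bfK e_i)^\rt$ is the $(m(j-1)+i)$th column of $\bfH$, and similarly for $\bfJ$), whereas you vectorize the forward derivative wholesale with $\vect(PXQ)=(Q^\rt\otimes P)\vect(X)$ and $PN^\rt q=(P\otimes q^\rt)\vect(N)$ and then read off the adjoint at the matrix level.
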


\begin{proof}
Let $\sd a_{ij}$, $\sd c_{ij}$,  $\sd b_{ij}$ and  $\sd d_{i}$ be the entries of $\sd A, \,\sd C, \,\sd b $ and  $\sd d$ respectively, using \eqref{eqnDualC}, we have
 \[
 \|(\sd A,\sd C,\sd b,\sd d)\|_{c^\ast}=\sum_{i,j}|\sd a_{ij}||a_{ij}|+\sum_{i,j}|\sd c_{ij}||c_{ij}|+\sum_{i}|\sd b_{i}||b_{i}|+\sum_{i}|\sd d_{i}||d_{i}|.
 \]
Applying Lemma~\ref{lemmaDualJ}, we derive that
\begin{align*}
\|J^{\ast}(u)\|_{c\ast}=&\sum_{j=1}^{n}\sum_{i=1}^{m}  |a_{ij}|\left|\left(r u^\top\call \bfK\bfK^\rt - \bfK^\rt\call^\rt u x^\top\right)_{ij}\right|+\sum_{i=1}^{m}
|b_{i}|\left|\left(\bfK^\top \call^\top  u\right)_{i}\right|
\\+&\sum_{j=1}^{n}\sum_{i=1}^{p}  |c_{ij}|\left|\left(xu^\top\call C_A^\dagger + \bfK\bfK^\rt \call^\top u r^\top AC_A^\dagger\right)_{ij}\right|+\sum_{i=1}^{p}
|d_{i}|\left|\left((C_A^\dagger)^\top \call^\top  u\right)_{i}\right|\\
=&\sum_{j=1}^{n}\sum_{i=1}^{m}  |a_{ij}|\left|\left(r_i  (\bfK\bfK^\rt e_j)^\top - x_j(\bfK e_i)^\rt \right)\call^\top u\right|+\sum_{i=1}^{m}
|b_{i}|\left|\left(\bfK e_i\right)^\top \call^\top u\right|
\\+&\sum_{j=1}^{n}\sum_{i=1}^{p}  |c_{ij}|\left|\left(x_i  (C_A^\dagger e_j)^\top + (r^\top AC_A^\dagger)_j(\bfK\bfK e_i)^\rt  \right)\call^\top u\right|\\
+&\sum_{i=1}^{p}
|d_{i}|\left|(C_A^\dagger e_i)^\top  \call^\top u\right|
\end{align*}
where $r_{i}$ is the $i$th component of $r$. Then it can be verified that $r_i  (\bfK\bfK^\rt e_j)^\top- x_j(\bfK e_i)^\rt $
is the $(m\,(j-1)+i)$th column of the $n \times (mn)$ matrix
$\bfH$
and $x_i  (C_A^\dagger e_j)^\top + (r^\top AC_A^\dagger)_j(\bfK\bfK e_i)^\rt $ is the $(p\,(i-1)+j)$th
column of the $n \times (np)$ matrix
$\bfJ$ in (\ref{eqnV}), implying that the above expression
equals
\[
\left\| \left[ \begin{array}{c}
D_A \bfH^\top \call {u} \\
D_{{C}} \bfJ^\top \call {u}\\
D_{{b}} \bfK^\top \call {u}\\
D_{{d}}( C_A^\dagger)^\top \call {u}
\end{array} \right] \right\|_1 =
\left\| [ \bfH D_A \ \ \bfJ D_{{C}}\ \ \bfK D_{{b}}\ \ C_A^\dagger D_{{d}}]^{\top}
\call^\top {u} \right\|_1 .
\]
The theorem then follows from Lemma~\ref{lemmaK}. \hfill $\Box$
\end{proof}

The following case study discusses some commonly used norms
for the norm in the solution space to obtain some specific
expressions of the condition number $\kappa$. It is not difficult to prove the following corollary and its proof is omitted.

\begin{corollary} \label{colKinfty1}
Using the above notations,
when the infinity norm is chosen as the norm in the solution
space $\cY$, we get
\begin{equation} \label{eqnKinfty1}
\kappa_{\infty} =\left\| \left|\call \bfH\right| \vect (|A|)
+\left|\call \bfJ\right| \vect (|C|)+ |\call \bfK| |b| +
|\call  C_A^\dagger| |d|\right\|_\infty,
\end{equation}
where $|\cB|=(|b_{ij}|)$, $b_{ij}$ is the $(i,j)$th entry of $\cB$.
\end{corollary}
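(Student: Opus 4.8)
The plan is to specialize Theorem~\ref{thmK} to the case $\cY = \R^k$ equipped with the infinity norm, whose dual norm is the $1$-norm, i.e. $\|\cdot\|_{\cY^\ast} = \|\cdot\|_1$. First I would substitute $\cY^\ast$-norm $= \|\cdot\|_1$ into the expression
\[
\kappa_\infty = \max_{\|u\|_1 = 1}
\left\| [\,\bfH D_A \ \ \bfJ D_C \ \ \bfK D_b \ \ C_A^\dagger D_d\,]^\top \call^\top u \right\|_1 ,
\]
and then observe that the quantity on the right is exactly the $1$-norm (induced, i.e. maximum-absolute-column-sum norm, by Lemma~\ref{lemma:adjoint} applied to the $(\infty,1)$ pairing) of the matrix $M := [\,\bfH D_A \ \ \bfJ D_C \ \ \bfK D_b \ \ C_A^\dagger D_d\,]^\top \call^\top$. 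Equivalently, $\kappa_\infty = \|\call\,[\,\bfH D_A \ \ \bfJ D_C \ \ \bfK D_b \ \ C_A^\dagger D_d\,]\|_\infty$ in the induced matrix $\infty$-norm, since the induced $\infty$-norm of a matrix equals the induced $1$-norm of its transpose.

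The second step is to rewrite the induced $\infty$-norm of $\call\,[\,\bfH D_A \ \ \bfJ D_C \ \ \bfK D_b \ \ C_A^\dagger D_d\,]$ as the $\infty$-vector-norm of the vector of its absolute row sums. For a block row $[\,\call\bfH D_A\,]$ acting by right-multiplication, the absolute row sums form the vector $|\call\bfH D_A|\,\mathbf{1} = |\call\bfH|\,|D_A|\,\mathbf{1} = |\call\bfH|\,\vect(|A|)$, using that $D_A = \diag(\vect(A))$ is diagonal so $|D_A|\mathbf 1 = |\vect(A)| = \vect(|A|)$, and that $|XD| = |X|\,|D|$ for diagonal $D$. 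Doing this for each of the four blocks and adding (the absolute row sum of a horizontal concatenation is the sum of the absolute row sums of the blocks) gives precisely
\[
|\call\bfH|\,\vect(|A|) + |\call\bfJ|\,\vect(|C|) + |\call\bfK|\,|b| + |\call C_A^\dagger|\,|d|,
\]
and taking its $\infty$-norm yields \eqref{eqnKinfty1}. Here I use $D_b = \diag(b)$, $D_d = \diag(d)$ so that $|D_b|\mathbf 1 = |b|$, $|D_d|\mathbf 1 = |d|$.

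There is essentially no hard obstacle here — this is why the paper says the proof is omitted — but the one place to be careful is the bookkeeping that identifies $\max_{\|u\|_1=1}\|M u\|_1$ with the induced norm and that correctly tracks the transpose so that ``maximum absolute column sum of $M$'' becomes ``maximum absolute row sum of $\call[\bfH D_A\ \bfJ D_C\ \bfK D_b\ C_A^\dagger D_d]$'', together with the elementary identity $|XD|=|X||D|$ for diagonal $D$ and $|D_A|\mathbf 1 = \vect(|A|)$. Everything else is a routine rearrangement of Theorem~\ref{thmK}. \hfill $\Box$
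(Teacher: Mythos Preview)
Your proposal is correct and is precisely the routine computation the paper has in mind: the paper explicitly states that ``it is not difficult to prove the following corollary and its proof is omitted,'' and your argument---specializing Theorem~\ref{thmK} with $\|\cdot\|_{\cY^\ast}=\|\cdot\|_1$, identifying the result as the induced $\infty$-norm of $\call[\bfH D_A\ \bfJ D_C\ \bfK D_b\ C_A^\dagger D_d]$, and unwinding the row sums blockwise via $|XD|\mathbf{1}=|X||v|$ for $D=\diag(v)$---is the natural way to fill the gap.
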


When the infinity norm is chosen as the norm in the solution
space $\R^n$, the corresponding relative mixed condition number is given by
\begin{equation}
\kappa_{\infty}^{\rel} =\frac{\left\| \left|\call \bfH\right| \vect (|A|)
+\left|\call \bfJ\right| \vect (|C|)+ |\call \bfK| |b| +
|\call  C_A^\dagger| |d|\right\|_\infty} {\|\call x\|_\infty},
\end{equation}

In the following, we consider the 2-norm on the solution space and derive an upper bound for the corresponding condition number respect to the 2-norm on the solution space.

\begin{corollary} \label{colK2}
When the 2-norm is used in the solution space, we have
\begin{equation} \label{eqnK2}
\kappa_2 \le
\sqrt{k} \, \kappa_{\infty} .
\end{equation}
\end{corollary}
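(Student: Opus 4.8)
The plan is to read off both $\kappa_2$ and $\kappa_\infty$ from the single operator-norm formula of Theorem~\ref{thmK} and then compare them through one elementary inequality between the $\ell_1$- and $\ell_2$-norms on $\R^k$. First I would fix the notation
\[
G \;=\; \bigl[\,\bfH D_A\ \ \bfJ D_{C}\ \ \bfK D_{b}\ \ C_A^\dagger D_{d}\,\bigr]^{\top}\call^{\top},
\]
a matrix with $k$ columns that does not depend on the norm chosen on the solution space. By Theorem~\ref{thmK} (cf.\ also Lemma~\ref{lemmaK}), for any norm on $\cY=\R^k$ one has $\kappa=\max_{\|u\|_{\cY^\ast}=1}\|G u\|_1$. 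Specialising with the dual-norm identities recalled in the excerpt, $\|\cdot\|_{2^\ast}=\|\cdot\|_2$ and $\|\cdot\|_{\infty^\ast}=\|\cdot\|_1$, this becomes
\[
\kappa_2=\max_{\|u\|_2=1}\|G u\|_1,\qquad \kappa_\infty=\max_{\|u\|_1=1}\|G u\|_1 .
\]

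Next I would observe that $\kappa_\infty$ is, by its very definition, the operator norm of $G$ induced by $\|\cdot\|_1$ on both domain and codomain — equivalently the largest $1$-norm of a column of $G$, since the maximum over the $\ell_1$-unit ball is attained at a signed coordinate vector $\pm e_i$. Hence $\|G u\|_1\le\kappa_\infty\,\|u\|_1$ for every $u\in\R^k$. Combining this with the Cauchy--Schwarz bound $\|u\|_1\le\sqrt{k}\,\|u\|_2$, valid for all $u\in\R^k$, gives, for any $u$ with $\|u\|_2=1$,
\[
\|G u\|_1\;\le\;\kappa_\infty\,\|u\|_1\;\le\;\sqrt{k}\,\kappa_\infty\,\|u\|_2\;=\;\sqrt{k}\,\kappa_\infty .
\]
Taking the maximum over $\|u\|_2=1$ then yields $\kappa_2\le\sqrt{k}\,\kappa_\infty$, which is \eqref{eqnK2}.

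I do not expect a genuine obstacle here: the only points needing a little care are the correct identification of the dual norms on $\cY$ (so that $\kappa_2$ and $\kappa_\infty$ are really the $(\ell_2,\ell_1)$- and $(\ell_1,\ell_1)$-operator norms of the \emph{same} matrix $G$) and the elementary bound $\|u\|_1\le\sqrt{k}\,\|u\|_2$; the rest is a one-line chain of inequalities, which is why the result can be stated as a corollary. As a sanity check, in the frequently used case $k=1$ (selecting a single component of the solution) the factor $\sqrt{k}$ equals $1$ and the corollary degenerates to the identity $\kappa_2=\kappa_\infty$, as it should. If one preferred to avoid the operator-norm language, the same argument can be run verbatim on the explicit expression of $\kappa_\infty$ in Corollary~\ref{colKinfty1}, but the route through Theorem~\ref{thmK} is the cleanest.
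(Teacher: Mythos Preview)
Your proposal is correct and follows essentially the same route as the paper: both express $\kappa_2$ and $\kappa_\infty$ as the $(\ell_2,\ell_1)$- and $(\ell_1,\ell_1)$-operator norms of the same matrix $G$ via Theorem~\ref{thmK}, then use $\|Gu\|_1\le\|G\|_1\|u\|_1$ together with $\|u\|_1\le\sqrt{k}\,\|u\|_2$ to conclude. The only cosmetic difference is that the paper names a maximising unit $2$-norm vector $\hat u$ before bounding, whereas you work with an arbitrary $u$ and take the maximum at the end.
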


\begin{proof}
When $\| \cdot \|_\cY= \| \cdot \|_2$, then
$\| \cdot \|_{\cY^\ast} = \| \cdot \|_2$. From Theorem~\ref{thmK},
\[
\kappa_2 = \left\| [ \bfH D_A \ \ \bfJ D_{{C}}\ \ \bfK D_{{b}}\ \ C_A^\dagger D_{{d}}]^{\top}
\call^\top\right\|_{2,1} .
\]
It follows from \cite{Higham2002Book} that for any matrix $B$,
$\| B \|_{2,1} = \max_{\| {u} \|_2 = 1} \| B {u} \|_1
= \| B \hat{{u}} \|_1$, where $\hat{{u}} \in \mathbb{R}^k$
is a unit 2-norm vector. Applying $\| \hat{{u}} \|_1 \le
\sqrt{k}\, \| \hat{{u}} \|_2$, we get
\[
\| B \|_{2,1} = \| B \hat{{u}} \|_1 \le
\| B \|_1 \| \hat{{u}} \|_1 \le
\sqrt{k}\, \| B \|_1 .
\]
Substituting the above $B$ with $[ VD_A\ \
W D_{b}]^{\top} L$, we have
\[
\kappa_2 \le \sqrt{k} \,
\left\| [ \bfH D_A \ \ \bfJ D_{{C}}\ \ \bfK D_{{b}}\ \ C_A^\dagger D_{{d}}]^{\top}
\call^\top\right\|_1 ,
\]
which implies (\ref{eqnK2}). \hfill $\Box$
\end{proof}

By now, we have considered the various mixed condition
numbers, that is, componentwise norm in the data space and
the infinity norm or 2-norm in the solution space. In the
rest of the subsection, we study the case of componentwise
condition number,
that is, componentwise norm in the solution space as well.

\begin{corollary} \label{colKc}
Considering the componentwise norm defined by
\begin{equation}\label{eq:comp norm}
\| {u} \|_c =
\min \{ \omega , \ | u_i | \le \omega \,
|(\call {x})_i|, \ i=1,...,k \} =
\max \{ |u_i| / |(\call {x})_i|, \ i=1,...,k \} ,
\end{equation}
in the solution space, we have the following three expressions
for the componentwise condition number
\begin{eqnarray*}
\kappa_c
&=& \| D_{\call {x}}^{-1}
\call [ \bfH D_A \ \ \bfJ D_{{C}}\ \ \bfK D_{{b}}\ \ C_A^\dagger D_{{d}}]
 \|_{\infty} \\
&=& \| |D_{\call {x}}^{-1}( \left|\call \bfH\right| \vect (|A|)
+\left|\call \bfJ\right| \vect (|C|)+ |\call \bfK| |b| +
|\call  C_A^\dagger| |d|) \|_{\infty}.
\end{eqnarray*}
\end{corollary}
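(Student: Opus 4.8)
The plan is to specialize Theorem~\ref{thmK} to the componentwise norm $\|\cdot\|_c$ on the solution space $\cY=\R^k$, exactly as Corollary~\ref{colKinfty1} specializes it to the infinity norm; the only new ingredient is the dual norm of the norm \eqref{eq:comp norm}. Writing $\|u\|_c=\|D_{\call x}^{-1}u\|_\infty$ presupposes that $\call x$ has no zero component --- the standing assumption whenever a componentwise analysis of the output is meaningful, so that $D_{\call x}$ is invertible. By a computation analogous to the one leading to \eqref{eqnDualC}, with the change of variable $w=D_{\call x}^{-1}u$,
\[
\|v\|_{c^\ast}=\max_{u\neq 0}\frac{\langle v,u\rangle}{\|u\|_c}
=\max_{w\neq 0}\frac{\langle D_{\call x}v,\,w\rangle}{\|w\|_\infty}=\|D_{\call x}v\|_1 .
\]

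Inserting $\|\cdot\|_{\cY^\ast}=\|D_{\call x}\,\cdot\,\|_1$ into the formula of Theorem~\ref{thmK} and abbreviating $M:=[\bfH D_A\ \ \bfJ D_{{C}}\ \ \bfK D_{{b}}\ \ C_A^\dagger D_{{d}}]^{\top}\call^\top$, we obtain $\kappa_c=\max_{\|D_{\call x}\bz\|_1=1}\|M\bz\|_1$. I would then substitute $v=D_{\call x}\bz$; since $D_{\call x}$ is an invertible diagonal matrix this is a bijection of $\R^k$, whence
\[
\kappa_c=\max_{\|v\|_1=1}\bigl\|M D_{\call x}^{-1}v\bigr\|_1=\bigl\|M D_{\call x}^{-1}\bigr\|_1,
\]
the matrix $1$-norm (largest absolute column sum). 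Using $\|B\|_1=\|B^{\top}\|_\infty$ together with $(M D_{\call x}^{-1})^{\top}=D_{\call x}^{-1}\call[\bfH D_A\ \ \bfJ D_{{C}}\ \ \bfK D_{{b}}\ \ C_A^\dagger D_{{d}}]$ gives the first displayed expression for $\kappa_c$.

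For the second expression I would evaluate that infinity norm block by block: because $D_A=\diag(\vect(A))$, the absolute row sum of the $i$th row of $\call\bfH D_A$ equals $\sum_j|(\call\bfH)_{ij}|\,|\vect(A)_j|=(|\call\bfH|\,\vect(|A|))_i$, and likewise the $i$th absolute row sums of $\call\bfJ D_{{C}}$, $\call\bfK D_{{b}}$ and $\call C_A^\dagger D_{{d}}$ are $(|\call\bfJ|\vect(|C|))_i$, $(|\call\bfK|\,|b|)_i$ and $(|\call C_A^\dagger|\,|d|)_i$. Adding the four contributions row by row and then factoring out $D_{\call x}^{-1}$ turns $\|D_{\call x}^{-1}\call[\bfH D_A\ \ \bfJ D_{{C}}\ \ \bfK D_{{b}}\ \ C_A^\dagger D_{{d}}]\|_\infty$ into $\|D_{\call x}^{-1}(|\call\bfH|\vect(|A|)+|\call\bfJ|\vect(|C|)+|\call\bfK||b|+|\call C_A^\dagger||d|)\|_\infty$, as claimed. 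None of the steps is a genuine obstacle; the only point that deserves a word of care is the invertibility of $D_{\call x}$ (equivalently, that \eqref{eq:comp norm} is a genuine norm rather than a seminorm), and keeping the $\vect$/Kronecker bookkeeping consistent with the conventions fixed just before Theorem~\ref{thmK}.
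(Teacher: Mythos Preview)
Your proposal is correct and follows essentially the same approach as the paper: the paper's own proof is the single sentence ``The expressions immediately follow from Theorem~\ref{thmK} and Corollary~\ref{colKinfty1},'' and you have simply written out in full the details behind that sentence --- identifying $\|u\|_c=\|D_{\call x}^{-1}u\|_\infty$, computing its dual, and then reducing to the infinity-norm case exactly as in Corollary~\ref{colKinfty1}.
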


\begin{proof}
The expressions immediately follow from Theorem~\ref{thmK}
and Corollary \ref{colKinfty1}. \hfill $\Box$
\end{proof}

\subsection{Condition estimations}
In this subsection, we will derive the sharp upper bounds for $\kappa_\infty^{\rel}$ and $\kappa_c$, which can be estimated efficiently by the Hager-Higham algorithm~\cite{HagerCond84,HighamFortran88,HighamSISC90} during using the generalized QR factorization method  (GQR)  \cite{CoxHigham99,Hammarling87,Paige90QR,AndersonBai92LAA} to solve LSE.

We first review the generalized QR factorization method for solving LSE. Let $A\in \R^{m\times n}$ and $C\in \R^{p\times n}$ with $m+p\geq n\geq p$. the generalized QR factorization  was introduced by Hammarling \cite{Hammarling87} and Paige \cite{Paige90QR}, which further was analyzed by Anderson et al. \cite{AndersonBai92LAA}. There are orthogonal matrices $Q\in \R^{n\times n}$ and $U\in \R^{m\times m}$ such that
\begin{equation}\label{eq:GQR}
U^\top AQ=\bordermatrix{&p&n-p  \cr
                m-n+p&L_{11}& {\bf 0}\cr
                n-p&L_{21}&L_{22}\cr
                }, \quad CQ=\bordermatrix{&p&n-p  \cr
                &S&{\bf 0}
                }
\end{equation}
where $L_{22}\in\R^{(n-p)\times(n-p)}$ and $S\in \R^{p\times p}$ are lower triangular. If rank condition \eqref{eq:rank condition} holds, then $L_{22},\, S$ are nonsingular \cite[Theorem 2.1]{CoxHigham99}. The generalized QR factorization method for solving LSE can be summarized as follows. Let $y_1\in \R^p$ be the solution of the triangular system $Sy_1=d$ and $y_2$ be the solution to the triangular system $L_{22}y_2 =c_2 -L_{21}y_1,$ where
$$
c=U^\rt b=\bordermatrix{ & \cr
                m-n+p& c_1\cr
                n-p&c_2\cr
                }.
$$
Then the solution $x$ to LSE can be computed by $x=Qy$, where $y=[y_1^\top \, y_2^\top]^\top$. The flops of the generalized QR factorization method are $2mn^2+4mnp+2np^2-2mp^2-2n^3/3-2p^3/3$ \cite[Table 2.1]{CoxHigham99}. Thus during GQR method, the decomposition \eqref{eq:GQR} has already been computed, which can be utilized to devise the method based on the Hager-Higham algorithm to estimate  the upper bounds for $\kappa_\infty^{\rel}$ and $\kappa_c$.

In the following, we will give upper bounds for $\kappa_\infty^{\rel}$ and $\kappa_c$, which can be estimated efficiently by the Hager-Higham algorithm~\cite{HagerCond84,HighamFortran88,HighamSISC90}. Firstly, note that for any
matrix $B\in\R^{p\times q}$ and diagonal matrix
$D_v\in\R^{q\times q}$,
\begin{equation*}\label{eq:norm diagonal}
  \|BD_v\|_\infty
  =\|\,|BD_v|\,\|_\infty
  =\|\,|B|\,|D_v|\,\|_\infty
  =\|\,|B||D_v|\bfe\,\|_\infty
  =\left\|\,\left|B\right||v|\right  \|_\infty.
\end{equation*}
where ${\bf e}=[1,\,\ldots,1]^\rt \in \R^q$. With the above property and triangle inequality, we can prove the following theorem and its proof is omitted.

\begin{corollary}\label{co:lsle}
With the notations above, denoting
\begin{align*}
\kappa_\infty^\u&=\frac{\left\|\call \bfK D_{|A||x|}\right\|_\infty+\left\|\call \bfK \bfK^\top D_{|A^\top||r|}\right\|_\infty
+\left\|\call C_A^\dagger D_{|C||x|}\right\|_\infty} {\|\call x\|_\infty}\\
&\quad + \frac{\left\|\call \bfK \bfK^\top D_{|C^\top||(AC_A^\dagger)^\top r|}\right\|_\infty+\left\|\call \bfK D_b \right\|_\infty+\left\|\call C_A^\dagger D_d\right\|_\infty}{\|\call x\|_\infty},\cr
\kappa_c^\u&=\left\|D_{\call x}^{-1}\call \bfK D_{|A||x|}\right\|_\infty+\left\|D_{\call x}^{-1}\call \bfK \bfK^\top D_{|A^\top||r|}\right\|_\infty
+\left\|D_{\call x}^{-1}\call C_A^\dagger D_{|C||x|}\right\|_\infty\\
&\quad +\left\|D_{\call x}^{-1} \call \bfK \bfK^\top D_{|C^\top||(AC_A^\dagger)^\top r|}\right\|_\infty  + \left\|D_{\call x}^{-1}\call \bfK D_b \right\|_\infty+\left\|D_{\call x}^{-1}\call C_A^\dagger D_d\right\|_\infty,
\end{align*}
we have
 \begin{align*}
   \kappa_\infty^{\rel}\leq \kappa_\infty^\u,\quad  \kappa_c\leq \kappa_c^\u.
  \end{align*}
\end{corollary}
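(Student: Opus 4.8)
The plan is to start from the two exact formulas already available: the expression $\kappa_\infty=\|v\|_\infty$ for the mixed condition number in Corollary~\ref{colKinfty1} (so that $\kappa_\infty^{\rel}=\|v\|_\infty/\|\call x\|_\infty$) and the expression $\kappa_c=\|D_{\call x}^{-1}v\|_\infty$ in Corollary~\ref{colKc}, where
\[
v:=|\call\bfH|\vect(|A|)+|\call\bfJ|\vect(|C|)+|\call\bfK|\,|b|+|\call C_A^\dagger|\,|d|\ \ge\ 0 .
\]
The whole proof then reduces to bounding $v$ entrywise by a sum of six nonnegative vectors, one per term of $\kappa_\infty^\u$. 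First I would push $\call$ inside the Kronecker products in \eqref{eqnV} using $\call=1\otimes\call$ and the mixed-product rule $(P\otimes Q)(R\otimes S)=(PR)\otimes(QS)$ (checking the conformability conditions), which gives
\[
\call\bfH=(\call\bfK\bfK^\rt)\otimes r^\rt-x^\rt\otimes(\call\bfK),\qquad \call\bfJ=x^\rt\otimes(\call C_A^\dagger)+(\call\bfK\bfK^\rt)\otimes(r^\rt AC_A^\dagger).
\]
Applying the entrywise triangle inequality $|M+N|\le|M|+|N|$ together with $|P\otimes Q|=|P|\otimes|Q|$, I would bound $|\call\bfH|$ and $|\call\bfJ|$ above by the sum of the absolute values of their two Kronecker summands.

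Next I would convert each resulting matrix--vector product into a product of a matrix with a nonnegative vector, using the identity $(Q^\rt\otimes P)\vect(X)=\vect(PXQ)$. This yields
\[
(|\call\bfK\bfK^\rt|\otimes|r|^\rt)\vect(|A|)=|\call\bfK\bfK^\rt|\,|A^\rt|\,|r|,\qquad (|x|^\rt\otimes|\call\bfK|)\vect(|A|)=|\call\bfK|\,|A|\,|x|,
\]
and analogously, with $|C|$ in place of $|A|$, the two pieces coming from $\bfJ$ become $|\call C_A^\dagger|\,|C|\,|x|$ and $|\call\bfK\bfK^\rt|\,|C^\rt|\,|(AC_A^\dagger)^\rt r|$. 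Together with the two remaining terms $|\call\bfK|\,|b|$ and $|\call C_A^\dagger|\,|d|$, this produces an entrywise inequality $v\le v_1+\cdots+v_6$ with every $v_i\ge 0$. Monotonicity and sub-additivity of $\|\cdot\|_\infty$ on the nonnegative orthant give $\kappa_\infty=\|v\|_\infty\le\sum_{i}\|v_i\|_\infty$, and then the diagonal-scaling identity $\|BD_w\|_\infty=\|\,|B|\,|w|\,\|_\infty$ recalled above (used with $w$ equal to $|A||x|$, $|A^\rt||r|$, $|C||x|$, $|C^\rt||(AC_A^\dagger)^\rt r|$, $b$, $d$ respectively) rewrites each $\|v_i\|_\infty$ as exactly one summand of the numerator of $\kappa_\infty^\u$; dividing by $\|\call x\|_\infty$ yields $\kappa_\infty^{\rel}\le\kappa_\infty^\u$. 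For the componentwise bound I would observe that, since $D_{\call x}^{-1}$ is diagonal and $v\ge 0$, $\kappa_c=\|D_{\call x}^{-1}v\|_\infty=\|D_{|\call x|}^{-1}v\|_\infty$; multiplying the inequality $v\le v_1+\cdots+v_6$ by the nonnegative diagonal $D_{|\call x|}^{-1}$ and running the same estimates (now absorbing the factor $D_{\call x}^{-1}$ into each $B$ via $|D_{\call x}^{-1}B|=D_{|\call x|}^{-1}|B|$) gives $\kappa_c\le\kappa_c^\u$.

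Everything here is routine once the Kronecker identities are in place, so the one point that really needs care is the bookkeeping in the first two steps: verifying that $\call$ genuinely distributes into $\bfH$ and $\bfJ$ (the dimensions in the mixed-product rule must be matched), and choosing the factors in each application of $(Q^\rt\otimes P)\vect(X)=\vect(PXQ)$ so that the six pieces land precisely in the forms $D_{|A||x|}$, $D_{|A^\rt||r|}$, $D_{|C||x|}$, $D_{|C^\rt||(AC_A^\dagger)^\rt r|}$, $D_b$, $D_d$. A minor secondary subtlety is that the triangle-inequality step replaces equalities by entrywise inequalities between vectors; this is harmless because after taking absolute values all vectors are entrywise nonnegative and $\|\cdot\|_\infty$ is monotone there, so the inequalities propagate unchanged to $\kappa_\infty^\u$ and $\kappa_c^\u$.
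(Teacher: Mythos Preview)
Your proposal is correct and follows precisely the route the paper indicates: the paper itself omits the proof, saying only that it follows from the identity $\|BD_v\|_\infty=\|\,|B|\,|v|\,\|_\infty$ together with the triangle inequality, and you have filled in exactly these details (the Kronecker manipulations with $\call$, the vec identity, and the entrywise bounds) in the expected way.
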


\begin{remark}
  From the constructed example in Section \ref{sec:nume ex}, the upper bounds $\kappa_\infty^\u$ and $\kappa_c^\u $ are attainable, thus they are sharp.
\end{remark}

If the factorization \eqref{eq:GQR} is computed, the following expressions can be verified:
\begin{align}\label{fac:KCA}
  \bfK&=Q\begin{bmatrix}0&0\cr 0& L_{22}^{-1}\end{bmatrix}U^\top, \quad \bfK \bfK^\top =Q\begin{bmatrix}0&0\cr 0& L_{22}^{-1} L_{22}^{-\top}\end{bmatrix}Q^\top,  \cr
  C_A^\dagger&=Q\begin{bmatrix}I_p \cr -L_{22}^{-1}L_{21}\end{bmatrix}S^{-1}, \quad (AC_A^\dagger)^\top r=S^{-\top}L_{11}^\top(c_1-L_{11}y_1).
\end{align}

For the each terms in $\kappa_\infty^\u$ and $\kappa_c^\u $, we can use  the classical condition estimation
method~\cite{HagerCond84,HighamFortran88,HighamSISC90} to estimate
them. This method is an efficient method for estimate one-norm of a matrix $\cB$, which involves a sequences of matrix multiplications $\cB v$ and $\cB^\top v$. By taking account of the decompositions \eqref{fac:KCA}, these matrix multiplications can be computed through solving some triangular linear system with different right hands. Thus the computational complexity of the algorithms to estimate $\kappa_\infty^\u$ and $\kappa_c^\u $ can be reduced significatively compared the GQR method.  The detailed descriptions of the Hager-Higham algorithm~\cite{HagerCond84,HighamFortran88,HighamSISC90} to estimate $\kappa_\infty^\u$ and $\kappa_c^\u $  are omitted.

\section{Numerical examples}\label{sec:nume ex}

In this section we test some numerical examples to validate the previous derived results. All the
computations are carried out using \textsc{Matlab} 8.1 with the machine precision
$\mu=2.2 \times 10^{-16}$.

Let $v$ be a $4\times 1$ vector with  $v_4=1/\eta$ where $\eta$ is a small positive number, and other components are set to 1. We construct the data $A,\, C, \,b$ and $d$ as folows
$$
A=\begin{bmatrix}
  1&0&0&0\cr
  0&0&0&0\cr
   0&1&0&0\cr
    0&0&0&0\cr
     0&0&0&0\cr
      0&0&0&0\cr
       0&0&\delta &0\cr
        0&0&0&0\cr
         0&0&0&\delta
\end{bmatrix}\in \R^{9\times 4},\, C=\begin{bmatrix}
   0&1&0&0\cr
    1&0&0&0\cr
\end{bmatrix}\in \R^{2\times 4},\, b=A\cdot v+10^{-5}\cdot b_2,\, d=\begin{bmatrix}
  1\cr 1
\end{bmatrix},
$$
where $b_2$ is an unitary vector satisfying $A^\top b_2=\bf 0$ and $\delta$ is a small positive number to control the conditioning of the augmented matrix $\cA$ defined in \eqref{eq:aug}. Obviously, the matrix $A$ has many zero components and is bad scaled because of the appearance of $\delta$. Thus, it is reasonable to measure the error on the input date by using componentwise perturbation analysis instead  of the normwise perturbation analysis. Also it can be verified that the rank conditions \eqref{eq:rank condition} are satisfied. For the perturbations, we generate them as
\begin{align}\label{eq:pert}
\Delta A&=10^{-8} \cdot \Delta A_{1}\odot A,\, \Delta C=10^{-8} \cdot \Delta C_{1}\odot C,\\
\Delta b&=10^{-8}\cdot \Delta b_{1}\odot  b,\, \Delta d=10^{-8} \cdot \Delta d_{1}\odot  d,\nonumber
\end{align}
where  each components of $\Delta A_1 \in \R^{9\times 4}$, $\Delta C_1 \in \R^{2\times 4}$, $\Delta b_1 \in \R^{9}$  and $\Delta d_1\in \R^{2}$ are uniformly distributed in the interval $ (-1,1) $,  and $\odot$ denotes the componentwise multiplication of two conformal dimensional matrices. When the perturbations are small enough, we denote the unique solution by $\tilde x$  of the following perturbed LSE problem:
\begin{equation*}
 \min_{\tilde x\in \R^n}\|(A+\Delta A) \tilde x-(b+\Delta b)\|_2 \mbox{ subject to } (C +\Delta C)\tilde x=d+\Delta d.
\end{equation*}
We use the GQR method \cite{CoxHigham99} to compute the solution $x$ and the perturbed solution $\tilde x$ separately. Usually the solution $x$ have badly scaled components,  for example,  the last component of $x$ is order of $1/\eta$ while other components of $x$ are equal to 1.

For the $\call$ matrix in our condition numbers, we choose
\[
\call_0=I_4,\quad
\call_1=\left[\begin{matrix} 1 & 0 & 0 &0 \\
                          0 & 1 & 0 &0\\
                          0 & 0 & 0 &1
                          \end{matrix}\right]\in \R^{3\times 4},\quad
\call_2=\left[\begin{matrix} 0 & 0 &0& 1\end{matrix}\right]\in \R^{1\times 4}.
\]
Thus, corresponding to the above three matrices, the whole
${x}$, the subvector $[x_1, \, x_2 , \, x_3]^{\top }$, and
the component $x_n$ are selected respectively.

We measure the normwise, mixed and componentwise relative errors in
$\call {x}$ defined by
\[
r_2^{\rm rel} =
\frac{\|\call \tilde{{x}} -
\call {x} \|_2}
{\|\call {x} \|_2},\quad
r_\infty^{\rm rel} =
\frac{\|\call \tilde{{x}} -
\call {x} \|_\infty}
{\|\call {x} \|_\infty},\quad
r_c^{\rm rel} =
\frac{\|\call \tilde{{x}} -
\call {x} \|_c}
{\|\call {x} \|_c},
\]
where $\|\cdot\|_c$ is the componentwise norm defined
in \eqref{eq:comp norm}. And
we measure the normwise and componentwise perturbation magnitudes as follows
\begin{eqnarray*}
\epsilon_1 &:=& \min\{\epsilon: \|\Delta A\|_F \leqslant
\epsilon \|A\|_F,
 \|\Delta b\|_2 \leqslant \epsilon \|b\|_2,\, \|\Delta C\|_F \leqslant \epsilon
 \|C\|_F,  \|\Delta d\|_2 \leqslant \epsilon
 \|d\|_2  \},\\
 \epsilon_0 &:=& \min\{\epsilon: |\Delta A| \leqslant
\epsilon |A|,
 |\Delta b| \leqslant \epsilon |b|, |\Delta C| \leqslant \epsilon
 |C|,  |\Delta d| \leqslant \epsilon
 |d|  \},
 \end{eqnarray*}
 where $|\Delta A| \leqslant
\epsilon |A|$ should be understood componentwisely.

 Cox and Higham \cite{CoxHigham99} defined the normwise condition number for LSE as follows
 \begin{equation*}
 {\rm cond}(A,C,b,d):=\lim_{\epsilon_1 \rightarrow 0}\sup\frac{\|\tilde x-x\|_2}{\epsilon_1\, \|x\|_2},
\end{equation*}
and proved that
\begin{align*}
\frac{\|\tilde x-x\|_2}{\|x\|_2}&\leq \kappa_1 \epsilon_1+\Oh(\epsilon_1^2),
\end{align*}
where \begin{align*}
 \kappa_1&=\left(\|C_A^\dagger\|_2\|d\|_2+\|\bfK\|_2\|b\|_2+\left\|x^\rt\otimes C_A^\dagger + \left[(r^\rt AC_A^\dagger)\otimes \left(\bfK\bfK^\rt\right)
\right]\Pi\right\|_2\right\|C\|_F.\cr
&\quad \left.\left\|-x^\rt\otimes \bfK +\left[r^\rt\otimes (\bfK\bfK^\rt)
\right]\Pi\right\|_2\|A\|_F\right)/\|x\|_2
\end{align*}
with $\Pi \in \R^{mn\times mn}$ being the {\em vec-permutation matrix}~\cite{Graham1981book}, and
$$
{\rm cond}(A,C,b,d) \leq \kappa_1 \leq 4 \, {\rm cond}(A,C,b,d).
$$

Li and Wang \cite{LiWang2016LSE} defined the absolute normwise condition number for a linear function of the solution $x$ to  LSE as follows
\begin{equation*}
 \kappa_2^{\rm abs}:=\lim_{\epsilon_2 \rightarrow 0}\sup\frac{\|\call (\tilde x-x)\|_2}{\epsilon_2},
\end{equation*}
where
$$
\epsilon_2:=\sqrt{\alpha_A^2\|\Delta A\|_F^2+\alpha_C^2\|\Delta C\|_F^2+\alpha_b^2\|\Delta b\|_2^2+\alpha_d^2\|\Delta d\|_2^2}
$$
with $\alpha_A>0,\, \alpha_C>0,\,\alpha_b>0$ and $\alpha_d>0$. So the explicit expression for the relative normwise condition number  for a linear function of the solution $x$ to  LSE was given by
\begin{align*}
\kappa_2&:=\lim_{\epsilon_2 \rightarrow 0}\sup\frac{\|\call (\tilde x-x)\|_2}{\epsilon_2\, \|\call x\|_2}\cdot \sqrt{\alpha_A^2\|A\|_F^2+\alpha_C^2\|C\|_F^2+\alpha_b^2\|b\|_2^2+\alpha_d^2\|d\|_2^2}\\
&=\frac{\|{K}\|_2^{1/2}}{\|\call x\|_2}\cdot\sqrt{\alpha_A^2\|A\|_F^2+\alpha_C^2\|C\|_F^2+\alpha_b^2\|b\|_2^2+\alpha_d^2\|d\|_2^2},
\end{align*}
where
\begin{align*}
  K&=\left(\frac {\|r\|_2^2} {\alpha_A^2} +\frac {\|r^\top AC_A^\dagger\|_2^2} {\alpha_C^2}\right) \call \left(\left((A\cP)^\top A\cP \right)^\dagger\right)^2 \call^\top\\
  &\quad+\left(\frac {\|x\|_2^2} {\alpha_A^2} +\frac {1} {\alpha_b^2}\right) \call \left(\left((A\cP)^\top A\cP \right)^\dagger\right) \call^\top\\
  &\quad+\left(\frac {\|x\|_2^2} {\alpha_C^2} +\frac {1} {\alpha_d^2}\right) \call C_A^\dagger (C_A^\dagger)^\top \call^\top+\frac{1}{\alpha_C^2}\call \left((A\cP)^\top A\cP \right)^\dagger xr^\top AC_A^\dagger (C_A^\dagger)^\top \call^\top\\
  &\quad+\frac{1}{\alpha_C^2}\call C_A^\dagger (C_A^\dagger)^\top A^\top rx^\top \left((A\cP)^\top A\cP \right)^\dagger \call^\top,
\end{align*}
and in the rest of this section we always set $\alpha_A=\alpha_C=\alpha_b =\alpha_d=1$.  From the definition of condition numbers, the following quantities
 $$
 \epsilon_1 \kappa_1,\,\epsilon_2 \kappa_2,\,
 \epsilon_0 \kappa_\infty^\rel,\,
 \epsilon_0 \kappa_c,
 $$
are the linear normwise, mixed and componentwise  asymptotic perturbation bounds, respectively.

In Table \ref{tab:T1}, we do numerical experiments for different choices of $\eta$ and $\delta$. It can be observed that when $\delta$ deceases from $10^{-3}$ to $10^{-6}$, ${\rm cond}(\cA)$ increases from $\Oh(10^6)$ to $\Oh(10^{12})$ which means that LSE tends to be more ill-conditioned with respect to the decreasing of $\delta$, while the changes of $\eta$ have little effects on the conditioning of $\cA$ because only the right hand side $b$ verifies when $\eta$ changes. In general, the perturbation magnitudes $\epsilon_0,\, \epsilon_1$ and $\epsilon_2$ are $\Oh(10^{-8})$ since the perturbations are generated via \eqref{eq:pert}. Thus, the linear normwise, mixed and componentwise  asymptotic perturbation bounds can bound the exact relative perturbation bounds for the solution, respectively. However, the linear normwise asymptotic perturbation bounds given by $\epsilon_1 \kappa_1$ and $\epsilon_2 \kappa_2$ can severely overestimate the relative normwise perturbation bounds $r_2^{\rm rel}$. For example, when $\eta=10^{-6}$, $\delta =10^{-6}$ and $\call=\call_1$, $\epsilon_2 \kappa_2\approx 1.7321\cdot 10^4$ while $r_2^{\rm rel}=4.9302\cdot 10^{-9}$. Also for $\kappa_1$, when $\eta=10^{-6}$, $\delta =10^{-6}$ and $\call=I_4$, $\epsilon_1 \kappa_1\approx 1.4142\cdot 10^{-2}$ while $r_2^{\rm rel}=4.1157\cdot 10^{-9}$. Thus the normwise condition number $\kappa_2$ for a linear function of $x$ proposed by Li and Wang \cite{LiWang2016LSE} is not effective, and cannot be used to measure the conditioning of the solution to LSE in this situation. On the other hand, from the values of $r_2^\rel$, $r_m^\rel$ and $r_c^\rel$, one can see that different components of $x$ have different relative errors, which means that we should measure the conditioning of components of $x$ through defining a linear function for $x$ given by \eqref{eq:g dfn} to select the interested component. This validates the motivations of this paper. For all $\eta$, $\delta$ and $\call$, the mixed and componentwise condition numbers are equal to $2$, which tell us that these special constructed LSE problems are well-conditioned under componentwise perturbation analysis. Moreover, the linear mixed and componentwise asymptotic perturbation bounds coincide with the relative mixed and componentwise errors for all choices of $\call$. The derived upper bounds  $\kappa_\infty^\u$ and $\kappa_c^\u$ can be equal to the exact mixed and componentwise condition numbers for most cases. Thus the proposed upper bounds are sharp. In a word, numerical examples validate the effectiveness of theatrical results on the mixed and componentwise condition numbers and their upper bounds.


\begin{landscape}

\begin{table}[H]
\caption{
Comparison of condition numbers with the corresponding relative
errors and the \textsc{Matlab}  ${\rm cond}(\cA)$.
} \label{tab:T1}
\centering
\begin{tabular}{ccccccccccccc}
\hline
$\eta$ & $\delta$ & $\call$ &${\rm cond}(\cA)$& $r_2^\rel$  & $\kappa_1 $ & $\kappa_2$ & $r_\infty^{\rel}$ &$\kappa_\infty^{\rel}$ &$\kappa_\infty^\u$& $r_c^{\rel}$&$\kappa_c^{\rel}$ &$\kappa_c^\u$ \\
\hline
$10^{-3}$ & $10^{-3}$ &
$I$&   1.8019e+06& 1.0131e-09 & 1.4174e+03 & 3.0000e+03 & 1.0131e-09 & 2 & 2.002 & 1.5796e-09 & 2 & 4  \\
 & &  $\call_1$& &9.9040e-10 &  & 1.7321e+06 & 1.5796e-09 & 2 & 4 & 1.5796e-09 & 2  & 4\\
 & & $\call_2$ & &1.0131e-09 &  & 3.0000e+03 & 1.0131e-09 & 2 & 2& 1.5796e-09 & 2  & 2   \\
\hline
$10^{-3}$ & $10^{-6}$  &
$I$ &    1.8019e+12 &2.3251e-09 & 1.4157e+06 & 2.8286e+06 & 2.3251e-09 & 2 & 2.002 & 5.0255e-09 & 2  & 4  \\
&&$\call_1$& &3.0982e-09 &  & 1.6331e+09 & 5.0255e-09 & 2 & 4& 5.0255e-09 & 2  & 4\\
&&$\call_2$ & &2.3251e-09 &  & 2.8286e+06 & 2.3251e-09 & 2 & 2 & 5.0255e-09 & 2  & 2  \\
\hline
$10^{-6}$ & $10^{-3}$ &
$I$&   1.8019e+06 &4.6600e-09 & 1.4166e+03 & 1.0000e+06 & 4.6600e-09 & 2 & 2& 7.3082e-09 & 2  & 4\\
&&$\call_1$& &4.5676e-09 &  & 5.7735e+11 & 7.3082e-09 & 2 & 4 & 7.3082e-09 & 2  & 4\\
&&$L_2$&  &4.6600e-09 & & 1.0000e+06 & 4.6600e-09 & 2 & 2 & 7.3082e-09 & 2  & 2 \\
\hline
$10^{-6}$ & $10^{-6}$ &
$I$&  1.8019e+12&4.1157e-09 & 1.4142e+06 & 3.0000e+06 & 4.1157e-09 & 2 & 2 & 8.2246e-09 & 2  & 4 \\
&&$\call_1$& & 4.9302e-09 &  & 1.7321e+12 & 8.2246e-09 & 2 & 4 & 8.2246e-09 & 2  & 4\\
&&$\call_2$ & &4.1157e-09 &  & 3.0000e+06 & 4.1157e-09 & 2 & 2& 8.2246e-09 & 2  & 2  \\
\hline
\end{tabular}
\end{table}

\end{landscape}

\section{Concluding Remarks}\label{sec:con}

In this paper we studied the perturbation analysis for the least squares  problem with equality constrains.  Condition number expressions for the linear function of the LSE solution were derived through the dual techniques under componentwise perturbations for the input data. Moreover,  sharp upper bounds for mixed and componentwise condition numbers could be estimated efficiently by the Hager-Higham algorithm~\cite{HagerCond84,HighamFortran88,HighamSISC90} when solving LSE using the generalized QR factorization method \cite{AndersonBai92LAA,CoxHigham99,Hammarling87,Paige90QR}. Numerical examples validated the effectiveness of the proposed condition numbers.

\end{document}